\documentclass[12pt]{amsart}

\usepackage{amsmath,amssymb,amsthm,amsfonts}
\usepackage[T1]{fontenc}
\usepackage[utf8]{inputenc}
\usepackage{mathrsfs}

\usepackage[margin=1 in]{geometry}
\usepackage{amscd}
\usepackage{latexsym}
\usepackage{color}
\usepackage{graphicx}
\usepackage{longtable} 
\usepackage{cite} 
\usepackage[colorlinks=true]{hyperref}
\hypersetup{urlcolor=blue, citecolor=blue, linkcolor=blue}

\newtheorem{theorem}{Theorem}[section]

\newtheorem{lemma}[theorem]{Lemma}

\newtheorem{proposition}[theorem]{Proposition}

\newtheorem{remark}[theorem]{Remark}
\newtheorem{ex}[theorem]{Example}

\theoremstyle{definition}
\theoremstyle{remark}
\numberwithin{equation}{section}

\newcommand{\diag}{\operatorname{diag}}

\newcommand{\R}{\mathbb{R}}
\newcommand{\C}{\mathbb{C}}
\renewcommand{\i}{{\mathbf i}}
\newcommand{\T}{\operatorname{UT}} 
\renewcommand{\t}{\mathfrak{ut}} 

\newcommand{\w}{\mathbf{w}}

\newcommand{\y}{\mathbf{y}}

\renewcommand{\b}{\mathbf{b}}

\newcommand{\mtx}[1]{\begin{bmatrix} #1 \end{bmatrix}}

\title{The Karcher Mean on Lie Groups of Upper Triangular Matrices}

\author{Huajun Huang}
\address{Department of Mathematics and Statistics, Auburn University, Auburn, AL 36849, USA}
\curraddr{}
\email{huanghu@auburn.edu}
\thanks{}

\author{Jimmie Lawson}
\address{Department of Mathematics, Louisiana State University, Baton Rouge, LA 70803, USA}
\curraddr{}
\email{lawson@math.lsu.edu}
\thanks{}

\begin{document}

\begin{abstract}
We study the weighted Karcher mean on the Lie group $\operatorname{UT}_n$ of upper triangular matrices with positive diagonal entries. 
Using a divided-difference description of analytic functions on upper triangular matrices and the Parlett recurrence, we prove the existence, 
uniqueness, and analyticity of the weighted Karcher mean and derive recursive formulas for its computation. The theory yields explicit 
expressions in low dimensions and formulas for weighted geometric means on $\operatorname{UT}_n$. More generally, the results reveal a 
close relationship between divided differences, analytic matrix functions, and nonlinear matrix means on triangular Lie groups.
\end{abstract}

\subjclass[2020]{Primary 47A64, Secondary 15A16, 15A24, 22E25}

\maketitle


\medskip

\noindent \textbf{Key words and phrases.}
Karcher mean, upper triangular matrix, analytic matrix function, divided difference function, Parlett recurrence. 

\section{Introduction}

In this paper we investigate the Karcher mean in the setting  of the  Lie group \(\T_n\) of \(n\times n\) upper triangular
matrices over \(\mathbb{R}\) with positive entries along the diagonal. As is well-known, the exponential
mapping is a diffeomorphism to the Lie group \(\T_n\) from its Lie algebra \(\t_n\) (all upper
triangular real matrices). Thus its inverse map \(\log : \T_n \to \t_n\) is an analytic diffeomorphism.

Given a sequence \(\w=(w_1,\dots,w_m)\) of \(m\) positive real numbers summing to \(1\) and
\(m\) arbitrary but fixed members \(A_1,\dots,A_m\in \T_n\), we define a weighted Karcher mean $X=G(\w ; A_1,\ldots, A_m)$  as a solution \(X\in\T_n\) of the equation
\begin{equation}\label{Karcher equation}
	\sum_{i=1}^m w_i \log(X^{-1}A_i)=0.
\end{equation}
We visualize
\(\w=(w_1,\dots,w_m)\) as weights and \((A_1,\dots,A_m)\) as the corresponding supports
of a finitely supported measure.

In \cite{Law25} the second author used the preceding definition to introduce the weighted Karcher mean on Lie groups.  
(In \cite{Law25} the Karcher mean  is called the exponential mean since its definition is given directly through the exponential 
map from the Lie algebra to the Lie group.)  In an earlier study \cite{Law24} the second author studied the Karcher mean on
unipotent Lie groups and proved the convergence of the projection mean iteration to the Karcher mean in finitely many steps.  
Earlier Pennec and Arsigny developed an alternative approach to the Karcher Lie group mean with what they called  bi-invariant 
means on Lie groups as exponential barycenters associated with the canonical Cartan--Schouten connection, thereby providing 
a rigorous geometric framework for group means \cite{PA12}.  In \cite{PL20} Pennec and Lorenzi extended geometric statistics 
from the Riemannian setting to affine connection spaces; in this framework, they reinterpret Karcher means in terms of exponential barycenters.

In a forerunner for this paper \cite{Hu26},  the first author has treated the case of the Karcher mean for \(\T_2\) and given a complete characterization 
of the convergence behavior of the projection mean iteration in this setting.  In this paper we show in the general setting of $\T_n$ that the Karcher
mean uniquely exists and provide computational tools for computing it.  These tools are demonstrated by working our formulas for the Karcher mean
for $\T_3$ and $\T_4$.  We show, among other things, that the Karcher mean always uniquely exists and is analytic.

Motivation for the study of the Karcher  mean is its appearance and importance
in the area of manifold-valued data analysis, where it can serve as a mean for data points, which can often be conceived
as elements of a Lie group. In \cite{PL20} the authors illustrate this approach in the area
of medical image analysis.

\section{Preliminaries} 
We adopt the following notation:
\begin{itemize}
	
	\item Let \(\diag(A)\) denote the diagonal matrix formed from the diagonal entries of \(A\), and let \(\diag(a_1,\ldots,a_n)\) denote the diagonal matrix with diagonal entries \(a_1,\ldots,a_n\) in that order.
	
	\item For \(i,j \in \{1,\ldots,n\}\), let \(E_{ij}\) denote the \(n\times n\) matrix whose only nonzero entry is \(1\) in the \((i,j)\)-entry.
	
	\item For \(i,j \in \{1,\ldots,n\}\) with \(i\le j\) and $A\in \t_n$, let \(A[i:j]\) denote the principal submatrix of \(A\) whose rows and columns are indexed by \(\{i,i+1,\ldots,j\}\).
	          We frequently refer to these principal submatrices as diagonal blocks.  The smallest diagonal blocks are the $1\times 1$ submatrices with entry a 
	          diagonal element.  

\end{itemize}

\subsection{Subgroups of Diagonal Blocks}
The set of diagonal blocks of size $[i:j]$ in $\T_n$ form a group isomorphic to $\T_k$ under matrix multiplication, where $k=j-i+1$.  This group isomorphically embeds
in $\T_n$  by extending a diagonal block $A[i:j]$ by sending an entry outside the diagonal block to $1$ if it is on the diagonal and $0$ otherwise.  It is  easily checked that this is
a group embedding.  We identify the groups via this isomorphic embedding and work in whatever setting is conventient.

Matrix sums, scalar products, and matrix products  in \(\t_n\) are  preserved under passage by restriction  to the subset of diagonal blocks of a given size $[i:j]$.    In particular
restriction is a homomorphism  of Lie groups.  Beyond that we see that the preservation of matrix sums, products, and scalar products leads to preservation of polynomial 
functions, and thus to preservation for converging matrix power series.    Thus by continuity a power series defining  an analytic function $F$ on $\T_n$ or $\t_n$ applied to a diagonal block $A[i:j]$  yields the same result as the restriction $F(A)[i:j]$.  For examples: 
\begin{eqnarray}
   A^{-1}[i:j] &=& (A[i:j]) ^{-1},\qquad A\in \T_n;
  \\ 
  \exp(A)[i:j] &=&  \exp(A[i:j]),\qquad A\in \t_n;
  \\
  \log(A)[i:j] &=&  \log(A[i:j]),\qquad A\in \T_n; 
  \\
  (AB)[i:j] &=&  (A[i:j])(B[i:j]).\qquad A,B\in \t_n.
\end{eqnarray}

The log function will be important in our coming considerations, so (3) of the following remark will be useful.
\begin{remark}\label{R:blocks}
$(1)$ For the smallest diagonal blocks $A[i:i]$ with entry $a_{ii}$, it follows that $F(A)[i:i]= [F(a_{ii})]$, equivalently $F(A)_{ii}=F(A_{ii})$.  \\
$(2)$ For the matrix exponential function $exp:\t_n\to \T_n$, we have $(\exp(A))_{ii}=\exp(A_{ii})$.  \\
$(3)$ For $\log:\T_n\to \t_n$, we have $\log(A)_{ii}=\log(A_{ii})$ for all $A\in \T_n$. 
\end{remark}
\begin{proof}
Items (1) and (2) are straightforward consequences of the material preceding this remark.  For (3), we
note from (2) that $\exp((\log(A))_{ii})=(\exp(\log(A))_{ii})=A_{ii}$.  Applying $\log$ to both sides yields
$\log(A)_{ii}=\log(A_{ii})$.

\end{proof}

\subsection{Basic Results Regarding the Karcher Mean}
 In this section we gather together some basic properties of the Karcher mean as
 defined in equation (\ref{Karcher equation}) that will be important in what follows.
\begin{lemma}\label{thm: mean property}
Let \(\w=(w_1,\dots,w_m)\) be a set of weights and \((A_1,\dots,A_m)\in (\T_n)^m\). If \(X\)
is a $\w$-weighted Karcher mean, 
then the following properties hold.
\begin{enumerate}
	\item 
	$$\diag(X)=\diag (A_1^{w_1}\cdots A_m^{w_m}).$$
	Equivalently, the diagonal \((\overline{a}_{11},\dots,\overline{a}_{nn})\) of
	\(X\) is given by
	\[
	\overline{a}_{ii}=(a_{ii}^1)^{w_1}(a_{ii}^2)^{w_2}\cdots (a_{ii}^m)^{w_m},
	\]
	where \(a_{ij}^k\) is the \((i,j)\)-entry of \(A_k\). 
	\item For $1\le i\le j\le n$, the principal submatrix  $X[i:j]$ is a $\w$-weighted Karcher mean of 
	 $(A_1[i:j],\ldots, A_m[i:j]) $. 
	In particular, the $(i,j)$-entry of $X$ depends only on $\w$ and the entries in the principal submatrices $A_1[i:j],\ldots, A_m[i:j]$.  
\end{enumerate}

\end{lemma}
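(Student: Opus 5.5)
Both parts follow from restricting the Karcher equation \eqref{Karcher equation} to diagonal blocks, using the fact recorded before Remark~\ref{R:blocks} that restriction to the block $[i:j]$ preserves sums, scalar multiples, products, inverses and the logarithm. I will prove (2) first and then obtain (1) as the special case $i=j$ of the same restriction argument.

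For (2), fix $1\le i\le j\le n$ and take the $[i:j]$ block of both sides of $\sum_k w_k\log(X^{-1}A_k)=0$. Restriction is linear, so the left side becomes $\sum_k w_k \log(X^{-1}A_k)[i:j]$. By the identity $\log(A)[i:j]=\log(A[i:j])$, each term equals $\log\bigl((X^{-1}A_k)[i:j]\bigr)$. By the product and inverse identities,
\[
(X^{-1}A_k)[i:j]=(X[i:j])^{-1}A_k[i:j].
\]
Hence $\sum_k w_k\log\bigl((X[i:j])^{-1}A_k[i:j]\bigr)=0$. Since $X[i:j]$ is upper triangular with positive diagonal, it lies in $\T_{j-i+1}$, so it is a $\w$-weighted Karcher mean of $(A_1[i:j],\dots,A_m[i:j])$.

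The assertion that $x_{ij}$ depends only on $\w$ and the blocks $A_k[i:j]$ is where care is needed. The display shows only that $X[i:j]$ is \emph{a} solution of the block equation. Without uniqueness of the Karcher mean, which is established only later in the paper, this alone does not determine $x_{ij}$ from the data. I would read the claim as saying that $X[i:j]$ satisfies an equation involving only $\w$ and the $A_k[i:j]$; this is exactly what the restriction argument gives. Once uniqueness is proved, the stronger functional-dependence reading follows immediately.

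For (1), apply the case $i=j$ of the block equation, or equivalently take diagonal entries directly using Remark~\ref{R:blocks}(3). This gives $\log((X^{-1}A_k)_{ii})=\log(a^k_{ii}/\overline a_{ii})$, so
\[
\sum_k w_k\bigl(\log a^k_{ii}-\log \overline a_{ii}\bigr)=0 .
\]
Since $\sum_k w_k=1$, this yields $\log\overline a_{ii}=\sum_k w_k\log a^k_{ii}$, that is, $\overline a_{ii}=\prod_k (a^k_{ii})^{w_k}$. Finally, the diagonal of $A_1^{w_1}\cdots A_m^{w_m}$ is $\prod_k (a^k_{ii})^{w_k}$. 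This holds because $A^{w}=\exp(w\log A)$, diagonals of products of upper triangular matrices multiply entrywise, and Remark~\ref{R:blocks}(2),(3) give $(A^w)_{ii}=(a_{ii})^w$. This proves $\diag(X)=\diag(A_1^{w_1}\cdots A_m^{w_m})$.
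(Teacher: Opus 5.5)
Your proof is correct and follows the paper's argument: you restrict the Karcher equation to diagonal blocks using the Section~2.1 identities, and you obtain (1) from the diagonal entries via Remark~\ref{R:blocks}(3) together with $\sum_k w_k=1$. Your caveat is also well taken: the argument shows only that $X[i:j]$ is \emph{a} Karcher mean of the blocks, so the ``in particular'' dependence clause genuinely needs the uniqueness proved later in Theorem~\ref{thm: mean existence uniqueness}, and that proof uses only the weaker form by induction, so there is no circularity.
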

 
\begin{proof}
By assumption, $X$ satisfies the Karcher equation  
\[\sum_{i=1}^m w_i \log(X^{-1}A_i)=0.
\]
\begin{enumerate}
\item Taking  diagonal parts, we obtain
\[
\begin{aligned}
0 &=\diag\left(\sum_{i=1}^m w_i \log(X^{-1}A_i)\right)
=\sum_{i=1}^m w_i \diag(\log(X^{-1}A_i)) \\
&= \sum_{i=1}^m w_i \log(\diag(X^{-1}A_i)) 
=\sum_{i=1}^m w_i \log(\diag(X)^{-1}\diag(A_i)) \\
&=\sum_{i-1}^m -w_i\log(\diag X)+\sum_{i=1}^m w_i\log(\diag(A_i))=\log(\diag (A_1^{w_1}\cdots A_m^{w_m}))-\log(\diag(X)).
\end{aligned}
\]
The first equality of the second line follows from Remark \ref{R:blocks}(3).  Hence the first claim is proved. 

\item 
Taking the $[i:j]$-principal blocks
on both sides of  the Karcher equation and referencing the second paragraph of 2.1 gives
\[
	0 = \left(\sum_{i=1}^m w_i \log(X^{-1}A_i)\right)[i:j]
	=\sum_{i=1}^m w_i \log(X[i:j]^{-1}\, A_i[i:j] ). 
\]
Therefore, $X[i:j]$ is a $\w$-weighted Karcher mean of 
$(A_1[i:j],\ldots, A_m[i:j]) $.  \qedhere 
\end{enumerate}
\end{proof}

The following result shows that 
the Karcher means are bi-affine invariant. 
 
\begin{lemma} \label{thm: Karcher bi-affine}
Let \(\w=(w_1,\dots,w_m)\) be a set of weights. 
Let $X$ be a $\w$-weighted Karcher mean of \((A_1,\dots,A_m)\in (\T_n)^m\).
Then for any $P,Q\in\T_n$, $PXQ$ is 
a  $\w$-weighted Karcher mean of \((PA_1Q,\dots, PA_mQ )\).
\end{lemma}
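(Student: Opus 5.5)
The plan is to verify the Karcher equation \eqref{Karcher equation} directly for the candidate $PXQ$ and the data $(PA_1Q,\dots,PA_mQ)$. The key identity is the conjugation equivariance of $\log$ on $\T_n$: for $B\in\T_n$ and invertible $Q\in\T_n$,
\[
\log(Q^{-1}BQ)=Q^{-1}\log(B)\,Q .
\]
To justify this, I would use that $\exp:\t_n\to\T_n$ is a diffeomorphism with inverse $\log$ (stated in the introduction). Since $\exp(Q^{-1}CQ)=Q^{-1}\exp(C)Q$ for every $C\in\t_n$ (immediate from the power series), setting $C=\log B$ gives $\exp(Q^{-1}\log(B)Q)=Q^{-1}BQ$. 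Here $Q^{-1}\log(B)Q\in\t_n$ because $\t_n$ is closed under multiplication by upper triangular matrices. Injectivity of $\exp$ on $\t_n$ then yields the identity. This is the only point needing care: the logarithm must be the one inverse to $\exp$ on $\t_n$, not an arbitrary branch. Because everything stays inside $\t_n$ and $\T_n$, where $\exp$ is bijective, no branch issues arise.

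With the identity in hand, the computation is short. For each $i$,
\[
(PXQ)^{-1}(PA_iQ)=Q^{-1}X^{-1}P^{-1}PA_iQ=Q^{-1}(X^{-1}A_i)Q,
\]
so
\[
\sum_{i=1}^m w_i\log\bigl((PXQ)^{-1}PA_iQ\bigr)=\sum_{i=1}^m w_i\,Q^{-1}\log(X^{-1}A_i)\,Q
=Q^{-1}\Bigl(\sum_{i=1}^m w_i\log(X^{-1}A_i)\Bigr)Q=0 ,
\]
using linearity of $C\mapsto Q^{-1}CQ$ and the hypothesis that $X$ satisfies \eqref{Karcher equation}.

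Finally, $PXQ\in\T_n$ because $\T_n$ is a group, so $PXQ$ is a $\w$-weighted Karcher mean of $(PA_1Q,\dots,PA_mQ)$. I expect no real obstacle. The factor $P$ cancels outright, and the factor $Q$ is handled entirely by the conjugation identity, whose justification through injectivity of $\exp$ on $\t_n$ is the one step that should be stated explicitly.
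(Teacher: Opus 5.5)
Your proposal is correct and follows the paper's proof: cancel $P$, write each $(PXQ)^{-1}(PA_iQ)$ as $Q^{-1}(X^{-1}A_i)Q$, pull the conjugation out of each logarithm, and use linearity together with the Karcher equation for $X$. The only difference is that you explicitly justify $\log(Q^{-1}BQ)=Q^{-1}\log(B)\,Q$ via injectivity of $\exp$ on $\t_n$, a step the paper uses without comment.
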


\begin{proof}

By assumption, $X$
	satisfies the  Karcher equation \eqref{Karcher equation}. 
	Then 
	\[
	\begin{aligned}
		\sum_{i=1}^{m} w_i \log \left( (PXQ)^{-1} (PA_iQ) \right) 
		&=
		\sum_{i=1}^{m} w_i \log \left( Q^{-1}X^{-1} A_iQ \right)
		\\
		&= Q^{-1}\left(\sum_{i=1}^{m} w_i \log \left( X^{-1} A_i \right)\right) Q =0.
	\end{aligned}
	\]
Therefore, $PXQ$ is 
a  $\w$-weighted Karcher mean of \((PA_1Q,\dots, PA_mQ )\). 
\end{proof}

When $m=2$, the weighted Karcher mean exists uniquely and coincides with the weighted geometric mean on $\T_n$. 

\begin{theorem}
Suppose \(m=2\) and \(\w=(1-t,t)\) for some \(t\in(0,1)\). For \(A_1,A_2\in\T_n\), the \(\w\)-weighted Karcher mean exists uniquely and depends analytically on the entries of \(A_1\) and \(A_2\). More precisely,
\begin{align}
G(\w;A_1,A_2)
&= A_1^{1/2}\bigl(A_1^{-1/2}A_2A_1^{-1/2}\bigr)^tA_1^{1/2} \\
&= A_1(A_1^{-1}A_2)^t \\
&= (A_2A_1^{-1})^tA_1.
\end{align}
We denote
\[
A_1\#_tA_2 := G(\w;A_1,A_2).
\]
Moreover,
\[
A_1\#_t A_2 = A_2\#_{1-t}A_1.
\]
\end{theorem}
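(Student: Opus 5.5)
Because $\exp:\t_n\to\T_n$ is an analytic diffeomorphism with inverse $\log$, real powers are defined on $\T_n$ by $B^s:=\exp(s\log B)$. These powers are analytic in $B$ and in $s$, they satisfy $B^sB^r=B^{s+r}$ and $\log(B^s)=s\log B$, and they are equivariant under conjugation: $\log(PBP^{-1})=P\log(B)P^{-1}$ for $P\in\T_n$. To see the last identity, note that $\exp(P\log(B)P^{-1})=PBP^{-1}$ and that $P\log(B)P^{-1}\in\t_n$, so it must equal $\log(PBP^{-1})$. In particular $A_1^{\pm1/2}\in\T_n$. The plan is to reduce the Karcher equation to the case $A_1=I$ using Lemma~\ref{thm: Karcher bi-affine}, solve it there, and then transport the solution back.

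\textbf{Existence and uniqueness for the middle formula.} Put $C=A_1^{-1}A_2$. The map $X\mapsto Y=A_1^{-1}X$ is a bijection of $\T_n$. Since $X^{-1}A_1=Y^{-1}$ and $X^{-1}A_2=Y^{-1}C$, it transforms the Karcher equation \eqref{Karcher equation} into
\[
(1-t)\log(Y^{-1})+t\log(Y^{-1}C)=0 .
\]
Set $U=\log(Y^{-1})\in\t_n$, so $Y^{-1}=\exp(U)$. Because $\log(Y^{-1})=-\log Y$, the equation becomes $t\log(\exp(U)C)=-(1-t)U$. Exponentiating gives $\exp(U)^tC^t=\exp(-(1-t)U)$. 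More cleanly, $\exp(U)C=\exp(-\tfrac{1-t}{t}U)$, and therefore $C=\exp(-U/t)$. The last step uses that $\exp(U)$ commutes with $\exp(sU)$. Since $\exp$ is injective on $\t_n$, this forces $U=-t\log C$, so $Y=\exp(-U)=C^t$. Conversely, $Y=C^t$ satisfies the equation: then $\log(Y^{-1})=-t\log C$, and $Y^{-1}C=C^{1-t}$ gives $\log(Y^{-1}C)=(1-t)\log C$, so the weighted sum vanishes. Hence the unique solution is $X=A_1(A_1^{-1}A_2)^t$. It depends analytically on the entries of $A_1$ and $A_2$, because inversion, products, $\log$ and $\exp$ are all analytic on $\T_n$ or $\t_n$. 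The only delicate step is the inversion of $\exp(U)C=\exp(-\tfrac{1-t}{t}U)$. It rests on injectivity of $\exp$ on $\t_n$ (the diffeomorphism fact from the introduction), not on any commutation between $U$ and $C$, because $C$ is determined by $U$ once the equation is rewritten.

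\textbf{The other expressions.} By conjugation equivariance, $A_1^{-1/2}(A_1^{-1}A_2)^tA_1^{1/2}=\bigl(A_1^{-1/2}A_2A_1^{-1/2}\bigr)^t$, which yields the first formula. Likewise $A_1(A_1^{-1}A_2)^tA_1^{-1}=(A_2A_1^{-1})^t$ gives the third formula. For the symmetry $A_1\#_tA_2=A_2\#_{1-t}A_1$, note that the Karcher equation with weights $(1-t,t)$ on $(A_1,A_2)$ is literally the same equation as the one with weights $(t,1-t)$ on $(A_2,A_1)$. By uniqueness the two means coincide. Alternatively, one can check directly that $A_2(A_2^{-1}A_1)^{1-t}=A_1(A_1^{-1}A_2)^{t}$, using $(A_2^{-1}A_1)^{1-t}=(A_1^{-1}A_2)^{t-1}$ and $A_2=A_1\,(A_1^{-1}A_2)$.
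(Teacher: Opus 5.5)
Your route is essentially the paper's. You reduce to the case $A_1=I$ using an invariance of the Karcher equation, solve that case to get a $t$-th power, and pass between the three forms by conjugation equivariance of $B\mapsto B^t$. The only difference is order: you obtain the middle formula first, by the left translation $X=A_1Y$, whereas the paper obtains the first formula first, by congruence. Your change of variables is slightly cleaner, because it gives an explicit bijection of solution sets and so transfers uniqueness, which the paper's appeal to Lemma~\ref{thm: Karcher bi-affine} leaves implicit. However, two displayed claims are false as written and must be corrected.

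First, the identity you use for the first formula has the outer factors reversed. Conjugation equivariance with $P=A_1^{-1/2}$ gives $A_1^{-1/2}(A_1^{-1}A_2)^tA_1^{1/2}=(A_1^{-3/2}A_2A_1^{1/2})^t$. Because the power map is injective on $\T_n$, this equals $(A_1^{-1/2}A_2A_1^{-1/2})^t$ only when $A_1$ and $A_2$ commute. Substituting your identity into the first formula would in fact produce $(A_1^{-1}A_2)^tA_1$ rather than $A_1(A_1^{-1}A_2)^t$. The correct identity uses $P=A_1^{1/2}$: $A_1^{1/2}(A_1^{-1}A_2)^tA_1^{-1/2}=(A_1^{-1/2}A_2A_1^{-1/2})^t$. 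Then $A_1^{1/2}(A_1^{-1/2}A_2A_1^{-1/2})^tA_1^{1/2}=A_1(A_1^{-1}A_2)^t$ as required.

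Second, the sentence ``Exponentiating gives $\exp(U)^tC^t=\exp(-(1-t)U)$'' is false. Exponentiating $t\log(\exp(U)C)=-(1-t)U$ gives $(\exp(U)C)^t=\exp(-(1-t)U)$. Splitting this into $\exp(U)^tC^t$ would need exactly the commutation between $U$ and $C$ that you later say you avoid. Your argument actually uses only $\exp(U)C=\exp(-\tfrac{1-t}{t}U)$, which is correct, so delete that line. With these two fixes the proof is complete.
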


\begin{proof}
For \(A\in\T_n\), consider the Karcher equation
\[
(1-t)\log(X^{-1}I_n)+t\log(X^{-1}A)=0.
\]
Since \(\log(X^{-1}I_n)=-\log X\), this equation is equivalent to
\[
\log (X^{(1-t)/t}) = \log(X^{-1}A),
\]
whose unique solution in \(\T_n\) is \(X=A^t\). Since the matrix power map is analytic on \(\T_n\), the solution depends analytically on the entries of \(A\).

Applying the congruence invariance of the Karcher mean yields
\begin{align*}
G(\w;A_1,A_2)
&= A_1^{1/2}
   G\bigl(\w;I_n,A_1^{-1/2}A_2A_1^{-1/2}\bigr)
   A_1^{1/2} \\
&= A_1^{1/2}
   \bigl(A_1^{-1/2}A_2A_1^{-1/2}\bigr)^t
   A_1^{1/2}.
\end{align*}
Using the similarity property, we obtain
\[
G(\w;A_1,A_2)= A_1 [A_1^{-1/2}
\bigl(A_1^{-1/2}A_2A_1^{-1/2}\bigr)^t
A_1^{1/2}]
=
A_1(A_1^{-1}A_2)^t.
\]
Similarly,
\[
G(\w;A_1,A_2)
=
(A_2A_1^{-1})^tA_1.
\]
Hence the weighted Karcher mean is uniquely determined and analytic in the entries of \(A_1\) and \(A_2\).

Finally,
\begin{align*}
A_2\#_{1-t}A_1
&= A_2(A_2^{-1}A_1)^{1-t} \\
&= A_2(A_2^{-1}A_1)(A_1^{-1}A_2)^t \\
&= A_1(A_1^{-1}A_2)^t \\
&= A_1\#_tA_2. \qedhere
\end{align*}
\end{proof}

\section{Parlett Recurrence, Divided-Difference Functions, and Analytic Functions on Triangular Matrices}
In this section we quickly review some standard methods of extending functions, particularly analytic functions, defined on a domain of real (or complex) numbers,
to matrix functions on $\T_n$.  Our primary concern is involves the computation of the logarithmic function on $\T_n$.  
For further reference, see \cite{dB05}.

\subsection{Parlett Recurrence}

In \cite{Par76} B.~N.~Parlett proposed a recursive scheme for extending an analytic function on a complex domain to an analytic function on upper triangular matrices over $\mathbb{C}$ whose diagonal entries lie in the domain.

From a
map \(t\mapsto f(t)\) we seek a function \(F=f(T)\) defined from upper triangular matrices with
distinct diagonal entries to \(\T_n\). Parlett uses the commutativity relation \(FT=TF\) to
derive the following recursion equation.

\medskip

\noindent \textbf{Algorithm 3.1. (Parlett recurrence for upper triangular matrices)}
\begin{equation}\label{Parlett recurrence}
	f_{ij}
	=
	t_{ij}\frac{f_{ii}-f_{jj}}{t_{ii}-t_{jj}}
	+
	\sum_{k=i+1}^{j-1}
	\frac{f_{ik}t_{kj}-t_{ik}f_{kj}}{t_{ii}-t_{jj}},
	\qquad i<j.
\end{equation}
In the preceding \(f_{ij}\) resp.\ \(t_{ij}\) represents the \(ij\)-entry of \(F=f(T)\) resp.\ \(T\). From Equation
\eqref{Parlett recurrence} we see that any element of \(F\) can be calculated so long as all the elements to the left
and below it are known. Thus the recurrence allows us to compute \(F\) a superdiagonal at a
time, starting with the diagonal elements \(f_{ii}=f(t_{ii})\).

The original Parlett recurrence  is 
formulated in block form. We record here the $2\times 2$ block version to be used in this paper.

\noindent \textbf{Algorithm 3.2 (Parlett recurrence for $2\times 2$ block upper triangular matrices).} 
Let $f$ be an analytic function, and let \(T=[T_{ij}]\) be a $2\times 2$ block upper triangular matrix. Then \(F=f(T)=[F_{ij}]\) is also block upper triangular, with diagonal blocks given by \(F_{ii}=f(T_{ii})\) for $i=1,2$, and the off-diagonal block \(F_{12}\) satisfies that
\begin{equation}\label{Parlett recurrence block}
	T_{11}F_{12}-F_{12}T_{22}
	=
	F_{11}T_{12}-T_{12}F_{22}.
\end{equation}

Our interest in both versions of the Parlett recurrence concerns the computation of the iteration arisen from the Karcher equation on the upper triangular matrices.

\subsection{Analytic functions on upper triangular matrices}

Let $f$ be an analytic function  on a complex domain $U$. 
Let $T=[t_{ij}]$ be a upper   triangular matrix whose diagonal entries are inside $U$.  The explicit expression of $f(T)$ can be obtained through the divided difference functions.  

Given $a_1, a_2, \ldots\in U$, we define $f^{[0]}(a_1) :=f(a_1)$ and for $k\ge 1$: 
\begin{equation}
	f^{[k]}(a_1,\ldots, a_{k+1}) :=
	\begin{cases}
		\dfrac{f^{[k-1]}(a_1,\ldots, a_k)-f^{[k-1]}(a_2,\ldots, a_{k+1})}{a_1-a_{k+1}}, &a_1\ne a_{k+1};
		\\[1em]
		\dfrac{\partial f^{[k-1]}}{\partial x_1}(a_1, a_2,\cdots,a_k), &a_1=a_{k+1}.
	\end{cases}
\end{equation}
We call $f^{[k]}$ \emph{the $k$-th divided difference function of $f$.}

The divided difference functions are widely used in recursive numerical computations of the coefficients for Newton interpolation polynomials. 
The following known formulas can be proved by induction. See \cite{dB05} for a more detailed treatment. 
\begin{enumerate}
    \item
    If $\mathcal{C}$ is a contour on $U$ that embrace $a_1,\ldots, a_{k+1}$,
then 
\begin{equation}
	f^{[k]}(a_1,\ldots, a_{k+1})
\, =\, 
\dfrac{1}{2\pi\i} \int_{\mathcal{C}} \dfrac{f(z)}{(z-a_1)\cdots (z-a_{k+1})}\, dz. 
\end{equation}

\item 
If $a_1,\ldots, a_{k+1}$ are distinct, then
\begin{equation}
	f^{[k]}(a_1,\ldots, a_{k+1})
	\, =\, \sum_{i=1}^{k+1} \dfrac{f(a_i)}{\underset{j\ne i}{\prod_{j=1}^{k+1}}  (a_i-a_j)}.
\end{equation}

\end{enumerate}

Now we use the divided difference function of $f$ to describe $f(T)$ when all of the diagonal entries $t_{ii}$ are inside $U$. 

A  sequence of integers \(\gamma=(i_1,\ldots, i_{k+1})\) is called an \((i,j)\)-path if
\[
i=i_1<i_2<\cdots<i_{k+1}=j.
\]
We refer to \(i_1,\ldots,i_{k+1}\) as the nodes of \(\gamma\), to
$
(i_1,i_2),\ldots,(i_k,i_{k+1})
$
as the arcs of \(\gamma\), and to \(k\) as the length of \(\gamma\), denoted by \(|\gamma|=k\).

Let \(\Gamma_{ij}\) denote the set of all \((i,j)\)-paths. Then
$
|\Gamma_{ij}|=2^{j-i-1}.
$

Given two indices \(1\le i<j\le n\), and an \((i,j)\)-path
\(\gamma=(i_1,\ldots,i_k,i_{k+1})\), we denote
\begin{eqnarray}
	f^{[\gamma]}(T)
	&:=& 
	f^{[|\gamma|]}\bigl(t_{i_1i_1},\, \ldots,\, t_{i_ki_k},\, t_{i_{k+1}i_{k+1}}\bigr),
\label {f gamma}\\
	\Pi_{\gamma}(T)
	&:=&
	t_{i_1i_2}\, t_{i_2i_3}\, \cdots \, t_{i_ki_{k+1}}. 
\label{Pi gamma}
\end{eqnarray}

\begin{theorem}\label{thm:Tn-analytic}
	Let \(T=[t_{ij}]\) be an $n\times n$ upper triangular matrix. Then \(f (T)=[f_{ij}]\) is also upper triangular, where 
	\[
	f_{ii}=f (t_{ii}), \qquad 1\le i\le n,
	\]
	and, for \(1\le i<j\le n\),
	\begin{align}
		f_{ij}
		&=
		\sum_{\gamma\in\Gamma_{ij}} 
		f^{[\gamma]}(T)\,\Pi_{\gamma}(T) 
		\label{fij_expression} \\
		&=
		\sum_{k=1}^{j-i}
		\ \sum_{i=i_1<\cdots<i_{k+1}=j}
		f^{[k]}\bigl(t_{i_1i_1},\, \ldots,\, t_{i_{k+1}i_{k+1}}\bigr)\,
		t_{i_1i_2}\, t_{i_2i_3}\, \cdots\,  t_{i_ki_{k+1}}. \notag
	\end{align}
\end{theorem}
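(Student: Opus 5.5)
We plan to argue via the Cauchy integral representation of $f(T)$, which makes the divided differences appear naturally. Take a contour $\mathcal{C}$ in $U$ enclosing all diagonal entries $t_{11},\ldots,t_{nn}$, so that
\[
f(T)=\frac{1}{2\pi\i}\int_{\mathcal{C}} f(z)\,(zI-T)^{-1}\,dz .
\]
The whole proof then reduces to computing the entries of the resolvent $(zI-T)^{-1}$ explicitly. Upper triangularity of $f(T)$ and the formula $f_{ii}=f(t_{ii})$ are immediate, since the resolvent of an upper triangular matrix is upper triangular with diagonal entries $1/(z-t_{ii})$.

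\textbf{Step 1 (resolvent entries).} Write $zI-T=D_z-N$, where
\[
D_z=\diag(z-t_{11},\ldots,z-t_{nn})
\]
and $N$ is the strictly upper triangular part of $T$. Then
\[
(zI-T)^{-1}=(I-D_z^{-1}N)^{-1}D_z^{-1}=\sum_{k=0}^{n-1}(D_z^{-1}N)^kD_z^{-1},
\]
a finite sum because $D_z^{-1}N$ is nilpotent. Expanding the $(i,j)$ entry of $(D_z^{-1}N)^kD_z^{-1}$ as a sum over index chains $i=i_1<i_2<\cdots<i_{k+1}=j$ (strictness comes from $N$ being strictly upper triangular) gives
\[
\bigl((zI-T)^{-1}\bigr)_{ij}=\sum_{\gamma\in\Gamma_{ij}}\frac{\Pi_\gamma(T)}{(z-t_{i_1i_1})\cdots(z-t_{i_{k+1}i_{k+1}})},\qquad i<j.
\]

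\textbf{Step 2 (integrate term by term).} Multiply by $f(z)/(2\pi\i)$ and integrate over $\mathcal{C}$. The contour integral formula (1) for divided differences identifies each term as $f^{[|\gamma|]}(t_{i_1i_1},\ldots,t_{i_{k+1}i_{k+1}})\,\Pi_\gamma(T)=f^{[\gamma]}(T)\,\Pi_\gamma(T)$. Summing over $\gamma\in\Gamma_{ij}$ yields \eqref{fij_expression}, and grouping the paths by their length $k$ gives the second displayed form.

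\textbf{Main obstacle.} The main point needing care is justifying the contour formula (1) when diagonal entries repeat, since the recursive definition then uses derivatives. This follows by continuity in the $a_i$, or by an induction on $k$ in which the Cauchy integral is differentiated under the integral sign. We also need the Cauchy-integral definition of $f(T)$ to agree with the power-series or Parlett definition; this is standard holomorphic functional calculus from \cite{dB05}.

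\textbf{Alternative route.} One could instead verify that the proposed entries satisfy the Parlett recurrence \eqref{Parlett recurrence} for distinct diagonals, using the divided-difference recursion, and then extend to repeated diagonal entries by continuity.
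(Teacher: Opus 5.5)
Your proof is correct, but your main argument takes a different route from the paper's; your ``alternative route'' is essentially what the paper does. The paper first assumes distinct diagonal entries and inducts on $n$. It uses the block-restriction property of Section 2.1 to reduce to the corner entry $f_{1n}$, substitutes the inductive formulas for $f_{1\ell}$ and $f_{\ell n}$ into the Parlett recurrence, regroups the paths in $\Gamma_{1n}$, and closes with the divided-difference recursion. It then passes to repeated diagonal entries by continuity.

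You instead write $f(T)$ as a Cauchy integral and expand the resolvent as the finite Neumann series $\sum_k (D_z^{-1}N)^k D_z^{-1}$. That series produces the sum over $(i,j)$-paths directly, and formula (1) turns each term into $f^{[\gamma]}(T)\,\Pi_\gamma(T)$.

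Your route has several advantages. It needs no induction and no reindexing step, which the paper only sketches. It needs neither distinct diagonal entries nor a separate continuity argument in $T$, because repeated nodes are absorbed into the contour formula (1), which the paper itself records as known. It also does not take the block-restriction property as an input; it recovers it, since $f_{ij}$ visibly depends only on $T[i:j]$.

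Its costs are modest. You rely on the holomorphic functional calculus and its agreement with the power-series or Parlett definition of $f(T)$. You also need a suitable cycle in $U$; if $U$ is not simply connected, use small circles around the distinct diagonal values. The paper's route has the merit of staying inside the Parlett-recurrence framework it reuses later, for the $E_{1n}$-perturbation lemma and the block logarithm formula.
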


\begin{proof} The formula of $f_{ii}$ is straightforward. 
It suffices to prove the formula \eqref{fij_expression} for upper triangular \(T\) with distinct diagonal entries and extend the formula continuously to non-distinct cases. 
We apply induction on \(n\) using Parlett recurrence. The cases \(n=1\) and \(n=2\) are immediate. Assume \(n\ge 3\), and suppose that \eqref{fij_expression} holds on \(\T_p\) for all \(p<n\).
	
	By Section 2.1,  
	\[
	f(T)[1:n-1]=f(T[1:n-1]), \qquad f(T)[2:n]=f(T[2:n]),
	\]
	and hence \eqref{fij_expression} already holds for all entries except possibly the \((1,n)\)-entry.
	
	Applying the Parlett recurrence formula \eqref{Parlett recurrence} to the \((1,n)\)-entry, we obtain
	\begin{align*}
		f_{1n}
		&=
		\frac{f_{11}-f_{nn}}{t_{11}-t_{nn}}\,t_{1n}
		+\sum_{\ell=2}^{n-1}\frac{f_{1\ell}t_{\ell n}-t_{1\ell}f_{\ell n}}{t_{11}-t_{nn}} \\
		&=
		f^{[1]} (t_{11},t_{nn})\,t_{1n}
		+\dfrac{\sum_{\ell=2}^{n-1} f_{1\ell}t_{\ell n}}{t_{11}-t_{nn}}
		-\dfrac{\sum_{\ell=2}^{n-1} t_{1\ell}f_{\ell n}}{t_{11}-t_{nn}}.
	\end{align*}
	Substituting the inductive formulas for \(f_{1\ell}\) and \(f_{\ell n}\) yields
	\begin{align*}
		f_{1n}
		&=
		f^{[1]} (t_{11},t_{nn})\,t_{1n} 
		+\dfrac{\displaystyle\sum_{\ell=2}^{n-1} \sum_{\gamma\in\Gamma_{1\ell}} f^{[\gamma]}(T) \,  \Pi_{\gamma}(T) \, t_{\ell n}}{t_{11}-t_{nn}}
		-\dfrac{\displaystyle \sum_{\ell=2}^{n-1} \sum_{\gamma\in\Gamma_{\ell n}} t_{1\ell} \, f^{[\gamma]}(T) \,  \Pi_{\gamma}(T)
		}{t_{11}-t_{nn}}.
	\end{align*}
	Reindexing the sums in numerators and using the recursive identity
	\[
	f^{[k+1]}(a_1,\ldots,a_{k+2})
	=
	\frac{f^{[k]}(a_1,\ldots,a_{k+1})-f^{[k]}(a_2,\ldots,a_{k+2})}{a_1-a_{k+2}},
	\]
	we obtain
	\[
	f_{1n}
	=
	\sum_{\gamma\in\Gamma_{1n}} f^{[\gamma]}(T) \,  \Pi_{\gamma}(T).
	\]
	Thus \eqref{fij_expression} holds for the \((1,n)\)-entry, and therefore it holds for all entries of \(f(T)\) when \(T\in\T_n\). This completes the proof of the theorem.
\end{proof}

\begin{remark}
Set $f$ to be the power function $\T_n\to\T_n$ such that $X\mapsto X^t$, the log function $\log: \t_n\to \T_n$, or the exp function $\exp: \T_n\to \t_n$, we  can obtain the explicit expression of the corresponding function for appropriate triangular matrices. 
\end{remark}

\begin{remark}
When the spectrum of a square matrix $A$ is contained in an open domain of an analytic function $f$, we may use Schur triangulation to get $A=UTU^*$ for a unitary $U$ and a triangular $T$ and get $f(A)= Uf(T)U^*$, in which $f(T)$ can be described by Theorem \ref{thm:Tn-analytic}. 
\end{remark}

As an application, we derive the inverse formula of a unipotent upper triangular matrix.

\begin{lemma}\label{thm:X-inv-A}
	Let \(X=\mtx{x_{ij}}\in\T_n\) be unipotent upper triangular. Then \(X^{-1}\in\T_n\) is also unipotent upper triangular, and
	\begin{equation}
		(X^{-1})_{ij}
		=
		\sum_{\gamma\in\Gamma_{ij}}
		(-1)^{|\gamma|}\Pi_{\gamma}(X),
		\qquad 1\le i<j\le n.
		\label{Xinv term}
	\end{equation}
	If, in addition, \(A=\mtx{a_{ij}}\in\T_n\), then \(X^{-1}A\in\T_n\),
	\[
	\diag(X^{-1}A)=\diag(A),
	\]
	and, for \(1\le i<j\le n\),
	\begin{equation}
		(X^{-1}A)_{ij}
		=
		a_{ij}
		+
		\sum_{\ell=i+1}^{j}
		\sum_{\gamma\in\Gamma_{i\ell}}
		(-1)^{|\gamma|}\Pi_{\gamma}(X)\,a_{\ell j}.
		\label{XinvA term}
	\end{equation}
\end{lemma}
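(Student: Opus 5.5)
We plan to derive \eqref{Xinv term} as a special case of Theorem \ref{thm:Tn-analytic}. The function we use is $f(z)=z^{-1}$, which is analytic on $U=\C\setminus\{0\}$. Since $X$ is unipotent, all of its diagonal entries equal $1$, so $f(X)=X^{-1}$. The first assertion is then immediate: $X^{-1}$ is upper triangular, and its diagonal entries are $(X^{-1})_{ii}=f(1)=1$, so $X^{-1}$ is unipotent.

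For the off-diagonal entries, Theorem \ref{thm:Tn-analytic} expresses $(X^{-1})_{ij}$ as a sum over $(i,j)$-paths $\gamma$ of $f^{[|\gamma|]}(1,\ldots,1)\,\Pi_\gamma(X)$. It therefore suffices to show that
\[
f^{[k]}(1,\ldots,1)=(-1)^k.
\]
We use the standard fact that for coinciding nodes $f^{[k]}(a,\ldots,a)=f^{(k)}(a)/k!$. This follows from the contour formula: for $f(z)=1/z$ and a contour around $1$ but not $0$, it gives $\frac{1}{2\pi\i}\int \frac{dz}{z(z-1)^{k+1}}$. That residue equals the $k$-th Taylor coefficient of $1/z$ at $1$, namely $(-1)^k$. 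For a purely elementary check, one can also verify the coinciding-node case by induction directly from the recursive definition.

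An alternative route avoids divided differences altogether. Write $X=I+N$ with $N$ strictly upper triangular, hence nilpotent. Then $X^{-1}=\sum_{k=0}^{n-1}(-1)^kN^k$, and $(N^k)_{ij}$ is exactly the sum of $\Pi_\gamma(X)$ over $(i,j)$-paths of length $k$, because only strictly increasing index chains contribute. I will present this as a confirming remark, or even as the main argument, since it is the more transparent of the two.

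For the second part, $X^{-1}A$ is a product of upper triangular matrices with positive diagonals, so it lies in $\T_n$. Its diagonal is $\diag(X^{-1})\diag(A)=\diag(A)$. For $i<j$ we have
\[
(X^{-1}A)_{ij}=\sum_{\ell=i}^{j}(X^{-1})_{i\ell}a_{\ell j}.
\]
Separating the $\ell=i$ term, which gives $1\cdot a_{ij}$, and substituting \eqref{Xinv term} for $\ell>i$ yields \eqref{XinvA term}.

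The only step requiring care is evaluating the divided difference at repeated nodes, which is why the nilpotent series argument is the safer fallback.
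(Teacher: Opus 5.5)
Your proposal is correct. Its first route is the paper's: apply Theorem~\ref{thm:Tn-analytic} to $f(z)=z^{-1}$ and reduce to showing $f^{[k]}(1,\ldots,1)=(-1)^k$.

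The only difference on that route is how you obtain this value. The paper proves, by induction on $k$, the closed form $f^{[k]}(a_1,\ldots,a_{k+1})=(-1)^k/(a_1\cdots a_{k+1})$ for distinct positive nodes, and then passes to coinciding nodes by continuity. You instead compute the confluent value directly, as the $k$-th Taylor coefficient of $1/z$ at $1$, using the contour formula. The paper's closed form is slightly more informative: combined with Theorem~\ref{thm:Tn-analytic}, it would give a path formula for $T^{-1}$ for every $T\in\T_n$, not only for unipotent ones.

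Your Neumann-series route $X^{-1}=\sum_{k=0}^{n-1}(-1)^kN^k$ is genuinely different and, for this lemma, more economical. It does not use Theorem~\ref{thm:Tn-analytic} at all. That theorem rests on the Parlett recurrence, which needs distinct diagonal entries, plus a continuity extension, and a unipotent $X$ is exactly the fully degenerate case where all diagonal entries coincide; your route avoids both. It also shows transparently why strictly increasing paths appear: they are the only index chains contributing to $(N^k)_{ij}$.

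The second part of your argument, for $X^{-1}A$, is identical to the paper's.
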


\begin{proof}
Let $f(x)=x^{-1}$ for $x\ne 0$. We prove that for positive numbers  $a_1, \ldots, a_{k+1}$, 
\begin{equation}\label{x_inv_tri}
    f^{[k]}(a_1, \ldots, a_{k+1})
=(-1)^{k} \dfrac{1}{a_1\cdots a_{k+1}}.
\end{equation}
Assume first that $a_1, \ldots, a_{k+1}$ are distinct. The formula is clearly true for $k=0$. Suppose the 
formula is true for $k-1$. Then
\begin{align*}
    f^{[k]}(a_1, \ldots, a_{k+1})
&= \dfrac{f^{[k-1]}(a_1, \ldots, a_{k})-f^{[k-1]}(a_2, \ldots, a_{k+1})}{a_1-a_{k+1}}
\\
&= (-1)^{k-1} \dfrac{\frac{1}{a_1\ldots a_{k}}-\frac{1}{a_2\ldots a_{k+1}}}{a_1-a_{k+1}}
=(-1)^{k} \dfrac{1}{a_1\cdots a_{k+1}}.
\end{align*}
Therefore, by induction, \eqref{x_inv_tri} is true for distinct  
positive  $a_1, \ldots, a_{k+1}$ and for all $k$, and it is also true for non-distinct cases by continuity. 

Now by \eqref{fij_expression}, when $X$ is unipotent upper triangular, we have $x_{11}=\cdots=x_{nn}=1$, so that for $1\le i< j\le n$, 
\[
(X^{-1})_{ij}
= \sum_{\gamma\in\Gamma_{ij}} 
		f^{[\gamma]}(T)\,\Pi_{\gamma}(T)
=  \sum_{\gamma\in\Gamma_{ij}} 
		(-1)^{k} \,\Pi_{\gamma}(T).
\]
So \eqref{Xinv term} is proved.
	
Finally, let \(A\in\T_n\). Since \(X\) is unipotent,
	\[
	\diag(X^{-1}A)=\diag(X^{-1})\diag(A)=\diag(A).
	\]
	Moreover, for \(1\le i<j\le n\),
	\[
	(X^{-1}A)_{ij}
	=
	\sum_{\ell=i}^{j}(X^{-1})_{i\ell}a_{\ell j}
	=
	a_{ij}
	+\sum_{\ell=i+1}^{j}(X^{-1})_{i\ell}a_{\ell j}.
	\]
	Substituting \eqref{Xinv term} into the last expression yields \eqref{XinvA term}.
\end{proof}

Lemma \ref {thm:X-inv-A} will be used later to find the expression of the Karcher mean on $\T_n$ using recursive algorithms. It can also be used to find the explicit analytic expression of the geometric mean  on $\T_n$ through the formula 
$$A_1\#_t A_2 = A_1(A_1^{-1}A_2)^t.$$

We may apply Parlett recurrence and the divided difference function to obtain the perturbation behavior  at the $(1,n)$ entry for any analytic function on a set of triangular matrices.

\begin{lemma}\label{thm: log 1n formula}
Let $T=\mtx{t_{ij}}$ be an upper triangular matrix whose diagonal entries lie in the interior of the domain of an analytic function $f$. Then, for every $x\in\C$,
\begin{equation}\label{log formula}
    f(T+xE_{1n})
    =
    f(T)
    +
    x\, f^{[1]}(t_{11},t_{nn})E_{1n}.
\end{equation}
\end{lemma}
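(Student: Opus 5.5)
The natural route is the path expansion of Theorem~\ref{thm:Tn-analytic}, applied to $T' := T + xE_{1n}$. Since $xE_{1n}$ only changes the $(1,n)$-entry, $T'$ is upper triangular with the same diagonal as $T$. Every diagonal entry of $T'$ therefore still lies in the interior of the domain of $f$, and so $f(T')$ is defined and given entrywise by \eqref{fij_expression}. In particular, $f(T')_{ii} = f(t_{ii}) = f(T)_{ii}$ for every $i$.

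For the off-diagonal entries $(i,j)$ with $i<j$, I compare the two path sums term by term.
\begin{itemize}
\item The divided-difference factor $f^{[\gamma]}(T')$ depends only on diagonal entries, so $f^{[\gamma]}(T') = f^{[\gamma]}(T)$ for every path $\gamma$.
\item The product $\Pi_\gamma(T')$ differs from $\Pi_\gamma(T)$ only if the arc $(1,n)$ belongs to $\gamma$.
\item Since nodes of a path are strictly increasing, an $(i,j)$-path contains the arc $(1,n)$ only if $i=1$ and $j=n$. In that case $\gamma$ must be exactly the length-one path $(1,n)$.
\end{itemize}
Hence $f(T')_{ij} = f(T)_{ij}$ for all $(i,j) \neq (1,n)$. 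For the $(1,n)$-entry, the only changed term is the one from $\gamma=(1,n)$:
\[
f^{[1]}(t_{11},t_{nn})\,(t_{1n}+x) - f^{[1]}(t_{11},t_{nn})\,t_{1n} = x\, f^{[1]}(t_{11},t_{nn}).
\]
This gives \eqref{log formula}.

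The only delicate point is the case of repeated diagonal entries, in particular $t_{11}=t_{nn}$. Theorem~\ref{thm:Tn-analytic} was proved for distinct diagonal entries and then extended by continuity, so the path formula is available in general with the derivative convention in the definition of $f^{[k]}$. Here $f^{[1]}(t_{11},t_{nn}) = f'(t_{11})$ when $t_{11}=t_{nn}$, and the argument above goes through unchanged.

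An alternative route, if one prefers to avoid the general formula, is the block Parlett relation \eqref{Parlett recurrence block}. Write $T$ in $2\times 2$ block form with blocks of sizes $1$ and $n-1$. Note that $E_{1n}$ commutes with every upper triangular matrix up to the scalars $t_{11}$ and $t_{nn}$: indeed $TE_{1n} = t_{11}E_{1n}$ and $E_{1n}T = t_{nn}E_{1n}$. Then $f(T)$ and $f(T')$ satisfy relations that differ only by a rank-one term supported at $(1,n)$. I expect the path-sum argument to be shorter and cleaner, so that is the one I would present.
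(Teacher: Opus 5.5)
Your argument is correct, but it reaches the result by a different route than the paper.

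\textbf{The paper's route.} The paper does not use the path expansion of Theorem~\ref{thm:Tn-analytic} here. It gets that every entry other than $(1,n)$ is unchanged from the block-restriction principle of Section~2.1: the diagonal blocks $[1:n-1]$ and $[2:n]$ of $T$ and $T+xE_{1n}$ coincide. Then, assuming $t_{11}\neq t_{nn}$, it writes the scalar Parlett recurrence \eqref{Parlett recurrence} for the $(1,n)$-entry of both $f(T)$ and $f(T+xE_{1n})$. All $f_{1k}$, $f_{kn}$ with $2\le k\le n-1$ agree, and so do all $t_{ij}$ except $t_{1n}$. Hence the two $(1,n)$-entries differ by exactly $x(f_{11}-f_{nn})/(t_{11}-t_{nn})$. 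The case $t_{11}=t_{nn}$ is then handled by a separate continuity argument.

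\textbf{Your route.} You read the result off the closed-form path sum. The observation that the arc $(1,n)$ occurs only in the one-arc path $\gamma=(1,n)$ replaces the block-restriction step. The confluent case needs no limit argument of its own, because it is already absorbed into Theorem~\ref{thm:Tn-analytic}.

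\textbf{Comparison.} The paper's approach is logically lighter: it uses a single step of the recurrence rather than the full path expansion, whose inductive proof the paper only sketches. Yours is shorter once that theorem is granted, and it makes transparent why exactly one term changes. The alternative you sketched via the block relation \eqref{Parlett recurrence block} is closer in spirit to what the paper does, although the paper uses the scalar form of the recurrence.
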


\begin{proof}
Set
\[
F=f(T)=\mtx{f_{ij}},
\qquad
F^\ast=f(T+xE_{1n})=\mtx{f_{ij}^\ast}.
\]
By the discussion in Section~2.1, the matrices $F$ and $F^\ast$ differ only in their $(1,n)$-entries. Hence it suffices to determine $f_{1n}^\ast-f_{1n}$.

Assume first that $t_{11}\neq t_{nn}$. Applying the Parlett recurrence formula \eqref{Parlett recurrence} to the $(1,n)$-entry yields
\[
f_{1n}
=
t_{1n}\frac{f_{11}-f_{nn}}{t_{11}-t_{nn}}
+
\sum_{k=2}^{n-1}
\frac{f_{1k}t_{kn}-t_{1k}f_{kn}}{t_{11}-t_{nn}}.
\]
Since the perturbation $xE_{1n}$ modifies only the $(1,n)$-entry of $T$, we similarly obtain
\[
\begin{aligned}
f_{1n}^\ast
&=
(t_{1n}+x)\frac{f_{11}-f_{nn}}{t_{11}-t_{nn}}
+
\sum_{k=2}^{n-1}
\frac{f_{1k}t_{kn}-t_{1k}f_{kn}}{t_{11}-t_{nn}} \\
&=
f_{1n}
+
x\,\frac{f_{11}-f_{nn}}{t_{11}-t_{nn}} \\
&=
f_{1n}
+
x\,f^{[1]}(t_{11},t_{nn}).
\end{aligned}
\]
Therefore,
\[
F^\ast-F
=
x\,f^{[1]}(t_{11},t_{nn})E_{1n},
\]
which proves \eqref{log formula} when $t_{11}\neq t_{nn}$.

The remaining case $t_{11}=t_{nn}$ follows by continuity of the divided difference $f^{[1]}(x_1, x_2)$.
\end{proof}

We will need the above perturbation formula
for
the log function on $\T_n$.

\section{The Karcher Mean on \(2\times 2\) Upper Triangular Matrices}

In this section we investigate Parlett recurrence and the Karcher mean for the group
\(\T_2\), which consists of all \(2\times 2\) upper triangular matrices with real entries and with
positive entries on the diagonal. 
The explicit Karcher mean formula on $\T_2$ has been given in \cite{Hu26}. 

We first identify the matrix logarithmic function on \(\T_2\) as that induced via Parlett
recurrence by \(x\mapsto \log  x\) on the real numbers. Let
\[
T=
\begin{bmatrix}
t_{11} & t_{12}\\
0 & t_{22}
\end{bmatrix},
\qquad \text{where } t_{11},t_{22}>0.
\]
Then by Lemma \ref{thm: log 1n formula},
\begin{equation}\label{log 2x2}
\log(T)=
\begin{bmatrix}
\log (t_{11}) &
\log^{[1]} (t_{11}, t_{nn}) \, t_{12}\\[1em]
0 & \log (t_{22})
\end{bmatrix},
\end{equation}
where $\log^{[k]} (x_1, x_2)$ is the $k$-th divided difference of log function. 
We have the following
proposition.

\begin{proposition}
The function \(\log : \T_2\to \t_2\) is given by equations \eqref{log 2x2} 
and is an analytic diffeomorphism.
\end{proposition}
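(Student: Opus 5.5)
The plan is to check three things in turn: that formula \eqref{log 2x2} really gives the logarithm, that the resulting map is analytic, and that its inverse is analytic. For the formula, we apply Lemma \ref{thm: log 1n formula} with $n=2$ and $f=\log$ to the diagonal matrix $D=\diag(t_{11},t_{22})$ with perturbation $x=t_{12}$. This is legitimate because the diagonal entries are positive and so lie in the domain of the principal branch of $\log$. Since $\log D=\diag(\log t_{11},\log t_{22})$, we get exactly \eqref{log 2x2}. Here $\log^{[1]}(t_{11},t_{22})=\frac{\log t_{11}-\log t_{22}}{t_{11}-t_{22}}$, interpreted as $1/t_{11}$ when $t_{11}=t_{22}$. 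We will also check directly that this matrix exponentiates back to $T$. The exponential of a $2\times 2$ upper triangular matrix with diagonal entries $u,v$ and corner entry $s$ has corner entry $\exp^{[1]}(u,v)\,s$, again by Lemma \ref{thm: log 1n formula} with $f=\exp$. Moreover $\exp^{[1]}(\log a,\log b)\cdot\log^{[1]}(a,b)=1$, because the two factors are reciprocal difference quotients. Hence the formula agrees with the inverse of the known diffeomorphism $\exp:\t_2\to\T_2$.

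For analyticity, the key observation is that $g(a,b)=\log^{[1]}(a,b)$ is real-analytic on $(0,\infty)^2$. To see this we will use the integral representation
\[
g(a,b)=\int_0^1 \frac{ds}{(1-s)a+sb},
\]
or equivalently the contour formula for divided differences. The integrand is analytic in $(a,b)$ and uniformly bounded on compact subsets of $(0,\infty)^2$, so $g$ is analytic even across the diagonal $a=b$. The three entries $\log t_{11}$, $g(t_{11},t_{22})\,t_{12}$ and $\log t_{22}$ are therefore analytic functions of $(t_{11},t_{12},t_{22})$, and so $\log$ is analytic.

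For the inverse, we will write down $\exp:\t_2\to\T_2$ explicitly as
\[
\mtx{u&s\\0&v}\mapsto\mtx{e^u&\exp^{[1]}(u,v)\,s\\0&e^v}.
\]
Here $\exp^{[1]}(u,v)=\int_0^1 e^{(1-s)u+sv}\,ds$ is entire and strictly positive. This map is analytic and is a two-sided inverse of $\log$ by the reciprocity noted above. It follows that $\log$ is a bijection with analytic inverse, that is, an analytic diffeomorphism. As a cross-check, the Jacobian of $\log$ is triangular with diagonal entries $1/t_{11}$, $g(t_{11},t_{22})$ and $1/t_{22}$, all positive, which is consistent with this conclusion.

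The main obstacle is the diagonal case $t_{11}=t_{22}$. There the difference-quotient definition hides the analyticity, and we need the integral form of $g$ to see it. Once that integral form is in place, the rest of the argument is routine.
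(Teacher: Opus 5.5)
Your proof is correct. The derivation of \eqref{log 2x2} matches the paper's: Lemma~\ref{thm: log 1n formula} applied to $\diag(t_{11},t_{22})$ perturbed by $t_{12}E_{12}$.

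For the diffeomorphism claim, the paper gives no separate argument. It relies on the fact, quoted in the introduction, that $\exp:\t_n\to\T_n$ is an analytic diffeomorphism, so its inverse $\log$ is one too. You instead prove this by hand for $n=2$:
\begin{enumerate}
\item you write $\exp$ on $\t_2$ explicitly through $\exp^{[1]}$;
\item you use the reciprocity $\exp^{[1]}(\log a,\log b)\,\log^{[1]}(a,b)=1$ to show the two explicit maps are mutually inverse;
\item you get analyticity across $t_{11}=t_{22}$ from the Hermite--Genocchi integral.
\end{enumerate}

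The paper's route is shorter and works uniformly for every $n$. Yours is self-contained and explicit, and it also checks something the paper takes for granted. The paper's $\log$ is defined as the inverse of $\exp$, but Lemma~\ref{thm: log 1n formula} computes the principal-branch analytic functional calculus. Your check that the formula exponentiates back to $T$ is exactly what identifies the two.

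Two small points of phrasing. First, for real-analyticity of $g$, do not appeal to boundedness on compact subsets of $(0,\infty)^2$. It is cleaner to let $(a,b)$ range over $\{\operatorname{Re}a>0,\ \operatorname{Re}b>0\}$, where the integrand is holomorphic and locally bounded (or simply cite the contour formula). Second, the Jacobian of $\log$ is triangular only after ordering the variables as $(t_{11},t_{22},t_{12})$. Its determinant $g(t_{11},t_{22})/(t_{11}t_{22})>0$ is unaffected.
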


We turn now to the Karcher mean. Let \(\{w_1,\dots,w_n\}\) be weights, that is to say
\(0<w_i<1\) for each \(w_i\) and \(\sum_{i=1}^n w_i=1\). Let \(\{A_1,\dots,A_n\}\) denote \(n\) elements of \(\T_2\),
where
\[
A_k=
\begin{bmatrix}
a_{11}^k & a_{12}^k\\[1ex]
0 & a_{22}^k
\end{bmatrix}
\qquad \text{for } 1\le k\le n.
\]
By Lemma \ref{thm: mean property} if a Karcher mean \(X\) exists, then
\[
x_{11}=\overline{a}_{11}=(a_{11}^1)^{w_1}\cdots (a_{11}^n)^{w_n}
\quad \text{and} \quad
x_{22}=\overline{a}_{22}=(a_{22}^1)^{w_1}\cdots (a_{22}^n)^{w_n}.
\]
Hence to find a Karcher mean we seek an \(X\) of the form
\[
X=
\begin{bmatrix}
\overline{a}_{11} & x_{12}\\
0 & \overline{a}_{22}
\end{bmatrix},
\]
and thus
\begin{equation}\label{X_inv}
X^{-1}=
\begin{bmatrix}
\dfrac{1}{\overline{a}_{11}} & -\dfrac{x_{12}}{\overline{a}_{11}\overline{a}_{22}}\\[1em]
0 & \dfrac{1}{\overline{a}_{22}}
\end{bmatrix}
\end{equation}
and
\[
X^{-1}A_k=
\begin{bmatrix}
\dfrac{a_{11}^k}{\overline{a}_{11}} &
\dfrac{a_{12}^k}{\overline{a}_{11}}-\dfrac{x_{12}a_{22}^k}{\overline{a}_{11}\overline{a}_{22}}\\[1em]
0 & \dfrac{a_{22}^k}{\overline{a}_{22}}
\end{bmatrix}.
\]
From these results we obtain
\begin{equation}\label{log(XinvAk)}
\log(X^{-1}A_k)=
\begin{bmatrix}
\log  \!\left(\dfrac{a_{11}^k}{\overline{a}_{11}}\right) &
\alpha_k\!\left(\dfrac{a_{12}^k}{\overline{a}_{11}}-\dfrac{x_{12}a_{22}^k}{\overline{a}_{11}\overline{a}_{22}}\right)\\[2ex]
0 & \log  \!\left(\dfrac{a_{22}^k}{\overline{a}_{22}}\right)
\end{bmatrix},
\qquad
\alpha_k=
\log^{[1]} \left(
\dfrac{a_{22}^k}{\overline{a}_{22}}, \dfrac{a_{11}^k}{\overline{a}_{11}}\right).
\end{equation}

We next consider the sum \(\sum_{k=1}^m w_k \log(X^{-1}A_k)\) and recall that \(X\) is a Karcher mean
if and only if this sum is the \(0\)-matrix. We note from equation \eqref{log(XinvAk)} the sum is always
\(0\) in the \((1,1)\) and \((2,2)\) entries, so the only thing that needs to be checked is the \((1,2)\)
entry. We solve the appropriate equation for this to be the case in the \((1,2)\)-coordinate:
\[
0=\sum_{k=1}^m w_k \alpha_k
\left(
\frac{a_{12}^k}{\overline{a}_{11}}-\frac{x_{12}a_{22}^k}{\overline{a}_{11}\overline{a}_{22}}
\right)
=
\frac{1}{\overline{a}_{11}}\sum_{k=1}^m w_k\alpha_k a_{12}^k
-
\frac{x_{12}}{\overline{a}_{11}\overline{a}_{22}}\sum_{k=1}^m w_k\alpha_k a_{22}^k.
\]

\begin{theorem}\label{thm: Karcher mean UT2}
Let \(\w=(w_1,\dots,w_m)\) be a set of weights and \((A_1,\dots,A_m)\in (\T_2)^m\). Then there exists a unique Karcher mean given by
\[
X=
\begin{bmatrix}
\overline{a}_{11} & x_{12}\\
0 & \overline{a}_{22}
\end{bmatrix}, \qquad
\text{where } 
x_{12}
=
\overline{a}_{22}
\left(
\frac{\sum_{k=1}^m w_k \alpha_k a_{12}^k}
{\sum_{k=1}^m w_k \alpha_k a_{22}^k}
\right),
\]
\(\overline{a}_{11}\) and \(\overline{a}_{22}\) are as given in Lemma \ref{thm: mean property}, and $\alpha_k$ is given in \eqref{log(XinvAk)}. 
\end{theorem}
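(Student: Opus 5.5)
Both existence and uniqueness come from the computation just before the statement, so the plan is to reduce the Karcher equation to a single scalar linear equation in $x_{12}$ and check that its coefficient never vanishes.

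\emph{Uniqueness.} Suppose $X$ is any Karcher mean. By Lemma \ref{thm: mean property}(1) its diagonal is forced to be $(\overline{a}_{11},\overline{a}_{22})$, so $X$ has the displayed form with only $x_{12}$ unknown. We then compute $X^{-1}$ by \eqref{X_inv} and form $X^{-1}A_k$. By \eqref{log 2x2} (that is, Lemma \ref{thm: log 1n formula} with $n=2$), $\log(X^{-1}A_k)$ is given by \eqref{log(XinvAk)}. The coefficient $\alpha_k$ is the first divided difference of $\log$ at the diagonal entries of $X^{-1}A_k$. By Lemma \ref{thm: mean property}(2), or simply because the diagonal parts of $\sum_k w_k\log(X^{-1}A_k)$ vanish by the choice of $\overline{a}_{ii}$, the Karcher equation \eqref{Karcher equation} reduces to the vanishing of the $(1,2)$ entry. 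That is the scalar equation derived just before the theorem.

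\emph{The key observation.} The coefficients $\alpha_k$ depend only on diagonal data and not on $x_{12}$. Hence the $(1,2)$ equation is affine-linear in $x_{12}$, with slope
\[
-\frac{1}{\overline{a}_{11}\overline{a}_{22}}\sum_{k=1}^m w_k\alpha_k a^k_{22}.
\]
I will show this slope is strictly negative. Since $\log$ is strictly increasing and concave on $(0,\infty)$, its first divided difference is positive: $\log^{[1]}(p,q)=(\log p-\log q)/(p-q)>0$ for $p\neq q$, and $\log^{[1]}(p,p)=1/p>0$. So each $\alpha_k>0$. Also $w_k>0$ and $a^k_{22}>0$, so $\sum_k w_k\alpha_k a^k_{22}>0$.

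\emph{Existence and conclusion.} Because the slope is nonzero, the linear equation has exactly one solution. Solving it, the factor $1/\overline{a}_{11}$ cancels, giving
\[
x_{12}=\overline{a}_{22}\,\frac{\sum_k w_k\alpha_k a^k_{12}}{\sum_k w_k\alpha_k a^k_{22}}.
\]
Conversely, the matrix $X$ built from this value satisfies every entry of the Karcher equation: the diagonal entries by the choice of $\overline{a}_{ii}$, and the $(1,2)$ entry by construction. So $X$ is a Karcher mean, and by the reduction above it is the only one.

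The only genuine point in the argument is the positivity of the denominator, which rests on the sign of $\log^{[1]}$. Everything else is the bookkeeping already displayed before the statement. As a remark, analyticity of $X$ in the entries of the $A_k$ follows because $\overline{a}_{ii}$, $\alpha_k$ and the quotient are all analytic with nonvanishing denominator.
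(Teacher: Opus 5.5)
Your proposal is correct and follows the same route as the paper: you force the diagonal via Lemma \ref{thm: mean property}(1), then solve the $(1,2)$-entry equation, which is affine in $x_{12}$ because each $\alpha_k$ depends only on diagonal data. You also make explicit the positivity of the denominator $\sum_k w_k\alpha_k a^k_{22}$ via $\log^{[1]}>0$, which the paper's brief proof leaves implicit here and states only later, in the proof of Theorem \ref{thm: mean existence uniqueness}.
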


\begin{proof}
The correctness of the entries on the diagonal follow from Lemma \ref{thm: mean property} and the last
assertion about \(x_{12}\) follows from the straightforward solution for \(x_{12}\) in the equation just
preceding this theorem.
\end{proof}

\section{Karcher Mean on $\T_n$}

\subsection{Existence, Uniqueness, and Analyticity}

We have shown that 
the Karcher mean on $\T_2$ uniquely exists and is analytic. 
Applying Parlett recurrence, we are able to extend this result to the general theory on $\T_n$.

\begin{theorem}\label{thm: mean existence uniqueness}
	(Existence, uniqueness, and analyticity of the Karcher mean on $\T_n$) 
Let $\w =(w_1,\ldots,w_m)\in (0,1)^m$ be weights such that $w_1+\cdots+w_m=1$.
Let $(A_1,\ldots, A_m)\in (\T_n)^m.$
	Then there exists a unique $\w$-weighted Karcher mean $X\in\T_n$ that satisfies the Karcher equation \eqref{Karcher equation}. 
	Moreover, $X$ depends analytically on the entries of $A_1,\ldots, A_m$.
\end{theorem}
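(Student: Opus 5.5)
We argue by induction on the superdiagonal index $d=j-i$, using Lemma \ref{thm: mean property}(2) to reduce to a single corner entry. For $d=0$, Lemma \ref{thm: mean property}(1) forces $x_{ii}=\overline{a}_{ii}=\prod_k (a^k_{ii})^{w_k}$. This choice is unique and analytic in the entries of $A_1,\ldots,A_m$, since those diagonal entries are positive. The case $n=2$ is Theorem \ref{thm: Karcher mean UT2}. For general $n$, assume inductively that for every $p<n$ the Karcher mean on $\T_p$ exists uniquely and is analytic. By Lemma \ref{thm: mean property}(2), any Karcher mean $X$ must satisfy
\[
X[1:n-1]=G(\w;A_1[1:n-1],\ldots,A_m[1:n-1]),\qquad X[2:n]=G(\w;A_1[2:n],\ldots,A_m[2:n]).
\]
These two blocks overlap consistently on $[2:n-1]$, since both restrict to the unique mean of the $[2:n-1]$ blocks. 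So every entry of $X$ except $x_{1n}$ is uniquely determined and is analytic in the data. It remains to show that exactly one value of $x_{1n}$ works and that this value is analytic.

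Write $X=X_0+xE_{1n}$, where $X_0$ has the entries just determined and $(1,n)$-entry $0$. The key computation is
\[
X^{-1}=X_0^{-1}-x\,X_0^{-1}E_{1n}X_0^{-1}=X_0^{-1}-\frac{x}{\overline{a}_{11}\overline{a}_{nn}}E_{1n},
\]
which holds because $E_{1n}ME_{1n}=0$ for upper triangular $M$, and because $E_{1n}X_0^{-1}$ and $X_0^{-1}E_{1n}$ only pick up the diagonal entries. Hence
\[
X^{-1}A_k=X_0^{-1}A_k-\frac{x\,a^k_{nn}}{\overline{a}_{11}\overline{a}_{nn}}E_{1n}.
\]
By Lemma \ref{thm: log 1n formula} applied to $f=\log$ and $T_k=X_0^{-1}A_k$,
\[
\log(X^{-1}A_k)=\log(X_0^{-1}A_k)-x\,\frac{a^k_{nn}}{\overline{a}_{11}\overline{a}_{nn}}\,\beta_kE_{1n},
\qquad
\beta_k=\log^{[1]}\!\left(\frac{a^k_{11}}{\overline{a}_{11}},\frac{a^k_{nn}}{\overline{a}_{nn}}\right),
\]
where the diagonal of $X_0^{-1}A_k$ is $a^k_{ii}/\overline{a}_{ii}$.

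By the inductive hypothesis, all entries of $\sum_k w_k\log(X^{-1}A_k)$ other than the $(1,n)$-entry already vanish; they are the block equations on $[1:n-1]$ and $[2:n]$. The $(1,n)$-entry is affine in $x$:
\[
c-\frac{x}{\overline{a}_{11}\overline{a}_{nn}}\sum_k w_k\beta_k a^k_{nn}=0,
\qquad c=\Bigl(\sum_k w_k\log(X_0^{-1}A_k)\Bigr)_{1n}.
\]
The main point is that the coefficient of $x$ is nonzero. This holds because $\log^{[1]}(a,b)$ is either $(\log a-\log b)/(a-b)$ or $1/a$, and in both cases it is strictly positive for $a,b>0$; the weights and the $a^k_{nn}$ are positive as well. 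So $x_{1n}$ is unique, namely
\[
x_{1n}=\overline{a}_{11}\overline{a}_{nn}\,c\Big/\sum_k w_k\beta_k a^k_{nn}.
\]
Existence follows because this $X$ satisfies every entry of \eqref{Karcher equation}.

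For analyticity, note that $c$ is an analytic function of the data. It is obtained by composing the analytic map $\log$ on $\T_n$, inversion, and products, applied to $X_0$, whose entries are analytic by induction. The divided difference $\log^{[1]}$ is analytic on $(0,\infty)^2$, for instance by the contour-integral representation. Since the denominator is positive, $x_{1n}$ is a quotient of analytic functions with nonvanishing denominator, and hence analytic. The step needing the most care is confirming that the off-corner equations impose no extra constraint involving $x$. This follows from Section 2.1: perturbing $E_{1n}$ changes only the $(1,n)$-entry of $\log(X^{-1}A_k)$, and the remaining entries are governed by the sub-block identities.
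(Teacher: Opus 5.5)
Your proof is correct and follows the paper's argument: induction on $n$, Lemma~\ref{thm: mean property}(2) to fix every entry except the $(1,n)$-entry, and the $E_{1n}$-perturbation formula of Lemma~\ref{thm: log 1n formula} to reduce the Karcher equation to one affine equation in $x$ whose coefficient is a positive combination of $\log^{[1]}$ values. The differences are cosmetic: you skip the paper's normalization $A_{(i)}=D^{-1}A_i$ via Lemma~\ref{thm: Karcher bi-affine}, which is why the factor $1/(\overline{a}_{11}\overline{a}_{nn})$ appears in your formula for $X^{-1}$, and you explicitly check that the two $(n-1)$-blocks agree on $[2:n-1]$, which the paper leaves implicit.
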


\begin{proof}
	Set
	\begin{align}
		D &:= \diag(A_1^{w_1}\cdots A_m^{w_m})=\diag(d_1,\ldots,d_n), \label{D}\\
		A_{(i)} &:= D^{-1}A_i=\bigl[a^{(i)}_{pq}\bigr], \qquad i=1,\ldots,m, \label{D_inv_A}
	\end{align}
	and, for \(1\le p<q\le n\),
	\begin{equation} \label{c_pq}
		c_{pq}:=\sum_{i=1}^m w_i\, a^{(i)}_{qq}\, \log^{[1]}\!\bigl(a^{(i)}_{pp},a^{(i)}_{qq}\bigr).  
	\end{equation}
	Since each \(A_i\in \T_n\), both \(D\) and \(A_{(i)}\) depend analytically on the entries of \(A_1,\ldots,A_m\). Moreover, by Lemma~2.2, if
	\[
	X'=G(\w;A_{(1)},\ldots,A_{(m)})
	\]
	exists, then
	\[
	G(\w;A_1,\ldots,A_m)=DX'.
	\]
	Therefore, it suffices to prove the existence, uniqueness, and analyticity of the \(\w\)-weighted Karcher mean of \((A_{(1)},\ldots,A_{(m)})\).
	
	We proceed by induction on \(n\). The case \(n=1\) is immediate.
	
	Assume now that \(n>1\) and that the statement has already been established for \(n-1\). By the induction hypothesis, there exists a unique matrix \(X_0'\in \T_n\) such that
	\begin{align}
		X_0'[1:n-1] &= G\bigl(\w;A_{(1)}[1:n-1],\ldots,A_{(m)}[1:n-1]\bigr), \label{eq:proof-block1}\\
		X_0'[2:n]   &= G\bigl(\w;A_{(1)}[2:n],\ldots,A_{(m)}[2:n]\bigr), \label{eq:proof-block2}
	\end{align}
	the \((1,n)\)-entry of \(X_0'\) is \(0\), and \(X_0'\) depends analytically on the entries of \(A_{(1)},\ldots,A_{(m)}\). The assumptions of $X_0'$ imply that 
	\[\diag(X_0')= \diag(A_{(1)}^{w_1}\cdots A_{(m)}^{w_m})
	=\diag(D^{-1})\diag(A_1^{w_1}\cdots A_m^{w_m})=I_n. 
	\]	
	
	Now define
	\begin{equation}
		Y:=\sum_{i=1}^m w_i \log\!\bigl((X_0')^{-1}A_{(i)}\bigr). \label{eq:proof-Y}
	\end{equation}
	By \eqref{eq:proof-block1} and \eqref{eq:proof-block2}, the Karcher equation holds on the two principal \((n-1)\times (n-1)\) diagonal blocks, and hence
	\[
	Y[1:n-1]=0
	\qquad\text{and}\qquad
	Y[2:n]=0.
	\]
	Since \(Y\) is upper triangular, these two identities force all entries of \(Y\) to vanish except possibly the \((1,n)\)-entry. Thus
	\begin{equation}
		Y=y_{1n}E_{1n} \label{eq:proof-Y-shape}
	\end{equation}
	for some \(y_{1n}\in\R\), and \(y_{1n}\) depends analytically on the entries of \(A_{(1)},\ldots,A_{(m)}\).
	
	We next determine the \((1,n)\)-entry of the desired Karcher mean. By Lemma~\ref{thm: mean property}(2), any Karcher mean of \((A_{(1)},\ldots,A_{(m)})\), if it exists, must agree with \(X_0'\) on all entries except possibly the \((1,n)\)-entry. We may therefore write
	\begin{equation}
		X' = X_0' + xE_{1n} \label{eq:proof-Xprime}
	\end{equation}
	for some \(x\in\R\), and seek \(x\) so that \(X'\) satisfies the Karcher equation
	\begin{equation}
		\sum_{i=1}^m w_i \log\!\bigl((X')^{-1}A_{(i)}\bigr)=0. \label{eq:proof-Karcher-eq}
	\end{equation}
	
	Since \(X_0'\in \T_n\) and
	$\diag(X_0')=I_n, $
	we have
	\[
	(X')^{-1}=(X_0'+xE_{1n})^{-1}=(X_0')^{-1}-xE_{1n}.
	\]
	Hence
	\[
	(X')^{-1}A_{(i)}
	=\bigl((X_0')^{-1}-xE_{1n}\bigr)A_{(i)}
	=(X_0')^{-1}A_{(i)}-xE_{1n}A_{(i)}.
	\]
	Because \(A_{(i)}\) is upper triangular, \(E_{1n}A_{(i)}=a^{(i)}_{nn}E_{1n}\), and therefore
	\begin{equation}
		(X')^{-1}A_{(i)}=(X_0')^{-1}A_{(i)}-x a^{(i)}_{nn}E_{1n}. \label{eq:proof-perturb}
	\end{equation}
	Also,
	\[
	\diag\bigl((X_0')^{-1}A_{(i)}\bigr)=\diag(A_{(i)})=\diag\bigl(a^{(i)}_{11},\ldots,a^{(i)}_{nn}\bigr).
	\]
	Applying Lemma \ref{thm:X-inv-A} for an \(E_{1n}\)-perturbation of log function, we obtain
	\[
	\log\!\bigl((X')^{-1}A_{(i)}\bigr)
	=
	\log\!\bigl((X_0')^{-1}A_{(i)}\bigr)
	-
	x a^{(i)}_{nn} \log^{[1]} \!\bigl(a^{(i)}_{11},a^{(i)}_{nn}\bigr)E_{1n}.
	\]
	Summing over \(i\) with weights \(w_i\) and using \eqref{eq:proof-Y} and \eqref{eq:proof-Y-shape}, we get
	\begin{align*}
		0
		&=\sum_{i=1}^m w_i \log\!\bigl((X')^{-1}A_{(i)}\bigr) \\
		&=\sum_{i=1}^m w_i \log\!\bigl((X_0')^{-1}A_{(i)}\bigr)
		-
		x\sum_{i=1}^m w_i a^{(i)}_{nn} \log^{[1]} \!\bigl(a^{(i)}_{11},a^{(i)}_{nn}\bigr)E_{1n} \\
		&=Y-xc_{1n}E_{1n} \\
		&=(y_{1n}-xc_{1n})E_{1n}.
	\end{align*}
	Thus
	\begin{equation}
		x=\frac{y_{1n}}{c_{1n}}. \label{eq:proof-x}
	\end{equation}
	
	Since each \(a^{(i)}_{11}>0\) and \(a^{(i)}_{nn}>0\), and since \(\log^{[1]} (s,t)>0\) for \(s,t>0\), we have \(c_{1n}>0\). Hence \eqref{eq:proof-x} determines a unique real number \(x\). Because both \(y_{1n}\) and \(c_{1n}\) depend analytically on the entries of \(A_{(1)},\ldots,A_{(m)}\), so does \(x\). Consequently,
	\[
	X'=X_0'+xE_{1n}
	\]
	is uniquely determined, belongs to \(\T_n\), satisfies the Karcher equation \eqref{eq:proof-Karcher-eq}, and depends analytically on the entries of \(A_{(1)},\ldots,A_{(m)}\).
	
	We have therefore proved the existence, uniqueness, and analyticity of the \(\w\)-weighted Karcher mean of \((A_{(1)},\ldots,A_{(m)})\). Multiplying by \(D\), we conclude that
	\[
	G(\w;A_1,\ldots,A_m)=DX'
	\]
	exists uniquely and depends analytically on the entries of \(A_1,\ldots,A_m\). This completes the induction and the proof.
\end{proof}

The constants $c_{pq}$ defined in \eqref{c_pq} are essential for the expressions of the Karcher means on $\T_n$. By \cite[Lemma 2.2]{Hu26}, we have
\[c_{pq}\ge 1,\qquad 
1\le p< q\le n.
\]

The preceding proof yields a finite recursive procedure for computing the Karcher mean on $\T_n$, assuming that the Karcher mean on $\T_{n-1}$
is known.  The explicit computation carried out by applying Parlett recurrence and divided difference functions will be shown next.

\subsection{An Iterative Algorithm for the Karcher Mean }

We are now in a position to present the analytic formula of the Karcher means on $\T_n$ in detail. 

\begin{theorem}\label{thm: Tn Karcher mean}
Let $\w =(w_1,\ldots,w_m)\in (0,1)^m$ be weights such that $w_1+\cdots+w_m=1$.
Let $(A_1,\ldots, A_m)\in (\T_n)^m.$ Then the $\w$-weighted Karcher mean
$X=\mtx{x_{ij}}\in\T_n$ is given by 
\[X=DX',\]
where $D=\diag(A_1^{w_1}\cdots A_m^{w_m})$, and
\[
X'=\mtx{x_{ij}'}
\]
is an unipotent upper triangular matrix whose entries may be computed recursively as follows: $\diag(X')=I_n$, and for each $1\le p<q\le n$, 
\begin{equation}
x_{pq}' = \frac{1}{c_{pq}}\sum_{i=1}^{m} w_i \, 
\Bigl ( 
\sum_{\gamma\in\Gamma_{pq} } 
\log^{[\gamma]} \bigl (A_{(i)} \bigr ) \;  \Pi_{\gamma} \bigl(  {X'_{pq}}^{-1} A_{(i)}    \bigr ) \Bigr ),
\label{x_pq'}
\end{equation}
where $X'_{pq}$ is the unipotent upper triangular matrix such that
\[
(X'_{pq})_{ij}=x_{ij}'\qquad \text{for } p\le i<j\le q,\ (i,j)\ne (p,q),
\]
and all other entries are $0$,
$\log^{[\gamma]}$ is defined in \eqref{f gamma} for $f(x)=\log(x)$, and $\Pi_{\gamma}$ is defined in \eqref{Pi gamma}.
The entries of ${X'_{pq}}^{-1} A_{(i)}$ are given in \eqref{XinvA term}. 
\end{theorem}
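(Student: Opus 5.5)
The plan is to reduce to the unipotent case as in the proof of Theorem~\ref{thm: mean existence uniqueness}, and then to read off the $(p,q)$-entry from the diagonal block $[p:q]$ using the explicit divided-difference formula of Theorem~\ref{thm:Tn-analytic}.

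\emph{Reduction to the unipotent case.} By Lemma~\ref{thm: Karcher bi-affine} with $P=D$ and $Q=I_n$, we have $G(\w;A_1,\ldots,A_m)=DX'$, where $X'=G(\w;A_{(1)},\ldots,A_{(m)})$. This mean exists and is unique by Theorem~\ref{thm: mean existence uniqueness}. Lemma~\ref{thm: mean property}(1) gives $\diag(X')=\diag(A_{(1)}^{w_1}\cdots A_{(m)}^{w_m})=D^{-1}D=I_n$, so $X'$ is unipotent. It then remains to verify \eqref{x_pq'} for each fixed pair $p<q$.

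\emph{Passing to the block $[p:q]$.} By Lemma~\ref{thm: mean property}(2), $X'[p:q]$ is the $\w$-weighted Karcher mean of $(A_{(1)}[p:q],\ldots,A_{(m)}[p:q])$. We apply the inductive step of the proof of Theorem~\ref{thm: mean existence uniqueness} to this block, which is of size $k=q-p+1$; its corner entry is the $(p,q)$-entry.

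\begin{itemize}
\item The matrix $X_0'$ there is exactly $X'_{pq}[p:q]$: it agrees with $X'[p:q]$ except that the corner entry is $0$. Entries of $X'_{pq}$ outside the block play no role, since they do not enter $(X'_{pq})^{-1}[p:q]=(X'_{pq}[p:q])^{-1}$ by Section~2.1.
\item The proof gives $x'_{pq}=y/c_{pq}$. Here $y$ is the corner entry of $\sum_i w_i\log\bigl((X'_{pq}[p:q])^{-1}A_{(i)}[p:q]\bigr)$, which equals the $(p,q)$-entry of $\sum_i w_i\log\bigl({X'_{pq}}^{-1}A_{(i)}\bigr)$, again by block restriction.
\item The constant $c_{pq}$ is the one in \eqref{c_pq} with indices $1,n$ replaced by $p,q$.
\end{itemize}

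\emph{Evaluating the $(p,q)$-entry of the logarithm.} Set $T_i={X'_{pq}}^{-1}A_{(i)}$. By Theorem~\ref{thm:Tn-analytic} with $f=\log$,
\[
\log(T_i)_{pq}=\sum_{\gamma\in\Gamma_{pq}}\log^{[\gamma]}(T_i)\,\Pi_\gamma(T_i).
\]
By Lemma~\ref{thm:X-inv-A}, $\diag(T_i)=\diag(A_{(i)})$. Since $\log^{[\gamma]}$ depends only on diagonal entries, $\log^{[\gamma]}(T_i)=\log^{[\gamma]}(A_{(i)})$. The entries of $T_i$ appearing in $\Pi_\gamma(T_i)$ are given explicitly by \eqref{XinvA term}. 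Multiplying by $w_i$, summing over $i$, and dividing by $c_{pq}$ gives \eqref{x_pq'}. Division is legitimate because $c_{pq}>0$: it is a positive combination of positive divided differences of $\log$.

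\emph{Well-definedness of the recursion.} $X'_{pq}$ involves only entries $x'_{ij}$ with $p\le i<j\le q$ and $(i,j)\ne(p,q)$, so $j-i<q-p$. Computing $X'$ one superdiagonal at a time, starting from $\diag(X')=I_n$, therefore determines every entry needed before it is used.

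The main obstacle is bookkeeping rather than analysis. One must check carefully that the $n\times n$ matrix $X'_{pq}$, with zeros outside the block, yields the same block $[p:q]$ of ${X'_{pq}}^{-1}A_{(i)}$ and of its logarithm as the $k\times k$ construction used in the existence proof. This relies on restriction to diagonal blocks being a homomorphism compatible with analytic functions (Section~2.1). One must also make sure that the index shift $1,n\mapsto p,q$ in the perturbation formula of Lemma~\ref{thm: log 1n formula} produces exactly $c_{pq}$.
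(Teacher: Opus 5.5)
Your proposal is correct and follows the paper's proof. You reduce to the unipotent mean $X'=G(\w;A_{(1)},\ldots,A_{(m)})$, restrict to the block $[p:q]$, reuse $x=y/c$ from the inductive step of Theorem~\ref{thm: mean existence uniqueness}, and expand the $(p,q)$-entry of the logarithm by Theorem~\ref{thm:Tn-analytic}. You also make explicit two points the paper leaves implicit: that $\log^{[\gamma]}(T_i)=\log^{[\gamma]}(A_{(i)})$ because $\diag(T_i)=\diag(A_{(i)})$, and that the recursion is well founded when carried out one superdiagonal at a time.
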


\begin{proof}
In Theorem \ref{thm: mean existence uniqueness},
we obtained  $X=DX'$ for the unipotent upper triangular matrix
\[
X'=\mtx{x_{ij}'} =G(\w;A_{(1)},\ldots,A_{(m)}),
\]
and Section 2.1 shows that $x_{pq}'$ for $1\le p<q\le n$ is the upper-right corner entry of
\[
G(\w;A_{(1)}[p:q],\ldots,A_{(m)}[p:q]).
\] 
By \eqref{eq:proof-Y} and \eqref{eq:proof-x}, 
\[
\begin{aligned}
x_{pq}' &=\frac{1}{c_{pq}} \Bigl(\sum_{i=1}^m w_i \log\!\bigl( {X_{pq}'}^{-1}A_{(i)} \bigr)\Bigr)_{pq}
 = \frac{1}{c_{pq}}
\sum_{i=1}^m w_i \Bigl(\log\!\bigl( {X_{pq}'}^{-1}A_{(i)}\bigr)\Bigr)_{pq}.
\end{aligned}
\]
Then \eqref{x_pq'} follows from Theorem \ref{thm:Tn-analytic}. 
\end{proof}

We now specialize the preceding results to the cases $n=3$ and $n=4$. 
 
\begin{ex}\label{thm: Log 3x3}
By Theorem \ref{thm:Tn-analytic},
the logarithm of a $3\times 3$ matrix $T=\mtx{t_{ij}} \in\T_3$ is 
$$
\log(T) =\mtx{f_{ij}} \in\t_3, 
$$
where
\[
\begin{aligned}
f_{ii} &= \log(t_{ii}),\qquad i=1,2,3,
\\
		f_{12} &=\log^{[1]} (t_{11},t_{22}) \, t_{12},
		\qquad 
		f_{23} = \log^{[1]} (t_{22},t_{33}) \, t_{23},
\\
		f_{13} &=
		\log^{[1]} (t_{11},t_{33})\,t_{13} + 
		\log^{[2]} (t_{11},t_{22},t_{33}) \, t_{12}t_{23}. 
\end{aligned}
\]
If $t_{11}, t_{22}, t_{33}$ are pairwise distinct, then 
\[
f_{12} = \dfrac{\log(t_{11})-\log(t_{22})}{t_{11}-t_{22}},
\qquad 
f_{23} = \dfrac{\log(t_{22})-\log(t_{33})}{t_{22}-t_{33}},
\]
and 
\[
\begin{aligned}
f_{13}
&=
\frac{\log(t_{11})-\log(t_{33})}{t_{11}-t_{33}}\,t_{13}
\\
&\qquad \qquad 
+
\left[
\frac{\log(t_{11})}{(t_{11}-t_{22})(t_{11}-t_{33})}
+\frac{\log(t_{22})}{(t_{22}-t_{11})(t_{22}-t_{33})}
+\frac{\log(t_{33})}{(t_{33}-t_{11})(t_{33}-t_{22})}
\right]t_{12}t_{23}.
\end{aligned}
\]
\end{ex}

\begin{theorem} \label{thm: Karcher mean 3x3}
Let $\w =(w_1,\ldots,w_m)\in (0,1)^m$ be weights such that $w_1+\cdots+w_m=1$.
Let $A_1,\ldots, A_m\in \T_3$. Then the $\w$-weighted Karcher mean is
	\[
		X =G(\w ; A_1,\ldots, A_m)= DX', 
\]
where   
\[
		X' =G(\w ; A_{(1)},\ldots, A_{(m)})
		=\begin{bmatrix}
			1&x_{12}' &x_{13}' \\ 0 &1 &x_{23}' \\ 0 &0 &1
		\end{bmatrix},
	\]
and the entries 
$x_{12}', x_{23}', x_{13}'$  are given by
	\[
	\begin{aligned}
		x_{12}' =
		\frac{1}{c_{12}}
		\sum_{i=1}^m  w_i\,
		& \log^{[1]} \bigl(a_{11}^{(i)},a_{22}^{(i)}\bigr)\, a_{12}^{(i)},
		\\
		x_{23}' =
		\frac{1}{c_{23}}\sum_{i=1}^m
		w_i\,
		& \log^{[1]} \bigl(a_{22}^{(i)},a_{33}^{(i)}\bigr)\, a_{23}^{(i)},
		\\
		x_{13}' =\frac{1}{c_{13}}
		\sum_{i=1}^{m} w_i
		&\Bigg[
		\log^{[1]} \bigl(a_{11}^{(i)},a_{33}^{(i)}\bigr)
		\big(a_{13}^{(i)} - x_{12}' a_{23}^{(i)} + x_{12}'x_{23}' a_{33}^{(i)}\big)
		\\
		&\qquad \qquad 
		+
		\log^{[2]} \bigl(a_{11}^{(i)},a_{22}^{(i)},a_{33}^{(i)}\bigr)
		\big(a_{12}^{(i)} - x_{12}' a_{22}^{(i)}\big)
		\big(a_{23}^{(i)} - x_{23}' a_{33}^{(i)}\big)
		\Bigg]. 
	\end{aligned}
	\]
\end{theorem}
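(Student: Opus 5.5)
The plan is to specialize Theorem \ref{thm: Tn Karcher mean} to $n=3$, computing each of the three off-diagonal entries of $X'$ in the order the recursion dictates: first $x_{12}'$ and $x_{23}'$, then $x_{13}'$. That $X=DX'$, with $X'$ unipotent and equal to $G(\w;A_{(1)},\ldots,A_{(m)})$, is already contained in Theorem \ref{thm: mean existence uniqueness}, so the only task is to evaluate formula \eqref{x_pq'} explicitly.

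\textbf{The entries $x_{12}'$ and $x_{23}'$.} For $(p,q)=(1,2)$ the only path in $\Gamma_{12}$ is $(1,2)$. The auxiliary matrix $X'_{12}$ is the identity, because there are no entries strictly inside the block other than the corner. Hence $\Pi_{(1,2)}\bigl({X'_{12}}^{-1}A_{(i)}\bigr)=a^{(i)}_{12}$, and $\log^{[(1,2)]}(A_{(i)})=\log^{[1]}(a^{(i)}_{11},a^{(i)}_{22})$. This gives the stated formula for $x_{12}'$. The same argument applied to the block $[2:3]$ gives $x_{23}'$.

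\textbf{The entry $x_{13}'$.} Here $X'_{13}$ is the unipotent matrix with entries $x_{12}'$ and $x_{23}'$ in positions $(1,2)$ and $(2,3)$, and with $0$ in position $(1,3)$. By Lemma \ref{thm:X-inv-A}, its inverse has $(1,2)$-entry $-x_{12}'$, $(2,3)$-entry $-x_{23}'$, and $(1,3)$-entry $x_{12}'x_{23}'$; the last comes from the path $(1,2,3)$ with sign $+$, since the path $(1,3)$ contributes $0$. Multiplying by $A_{(i)}$ gives the entries of $B_i:={X'_{13}}^{-1}A_{(i)}$:
\[
(B_i)_{12}=a^{(i)}_{12}-x_{12}'a^{(i)}_{22},\qquad
(B_i)_{23}=a^{(i)}_{23}-x_{23}'a^{(i)}_{33},\qquad
(B_i)_{13}=a^{(i)}_{13}-x_{12}'a^{(i)}_{23}+x_{12}'x_{23}'a^{(i)}_{33}.
\]
By Lemma \ref{thm:X-inv-A}, the diagonal of $B_i$ equals that of $A_{(i)}$, so the divided differences are evaluated at $a^{(i)}_{11},a^{(i)}_{22},a^{(i)}_{33}$.

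The set $\Gamma_{13}$ consists of the two paths $(1,3)$ and $(1,2,3)$. By Theorem \ref{thm:Tn-analytic} (or directly by Example \ref{thm: Log 3x3}),
\[
(\log B_i)_{13}=\log^{[1]}\bigl(a^{(i)}_{11},a^{(i)}_{33}\bigr)(B_i)_{13}+\log^{[2]}\bigl(a^{(i)}_{11},a^{(i)}_{22},a^{(i)}_{33}\bigr)(B_i)_{12}(B_i)_{23}.
\]
Summing with the weights $w_i$ and dividing by $c_{13}$ yields the stated expression.

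\textbf{Main point to check.} The step that needs the most care is justifying the use of $X'_{13}$, whose corner entry is zero, in place of $X'$ itself. This is exactly the setup in the proof of Theorem \ref{thm: mean existence uniqueness}. There, $Y=\sum_i w_i\log\bigl({X'_{13}}^{-1}A_{(i)}\bigr)$ is concentrated in the $(1,3)$-entry, and the corner entry of the mean is obtained as $y_{13}/c_{13}$. The remaining work is sign bookkeeping in the computation of ${X'_{13}}^{-1}$.
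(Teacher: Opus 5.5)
Your proposal is correct and follows the paper's route. The paper presents this theorem as the $n=3$ specialization of Theorem \ref{thm: Tn Karcher mean}, using Lemma \ref{thm:X-inv-A} for the entries of ${X'_{13}}^{-1}A_{(i)}$ and the $\T_3$ logarithm of Example \ref{thm: Log 3x3}, exactly as you do; your computations, including $({X'_{13}}^{-1})_{13}=x_{12}'x_{23}'$ and the identification of $X'_{13}$ with the matrix $X_0'$ in the proof of Theorem \ref{thm: mean existence uniqueness}, are all correct.
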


\begin{ex}\label{thm: Log 4x4}
 Theorem \ref{thm:Tn-analytic} implies that
the logarithm of a  $4\times 4$ matrix
  $T=\mtx{t_{ij}} \in\T_4$ is
$$
	\log(T) =\mtx{f_{ij}}\in\t_4, 
$$
	in which
	$f_{ii}=\log(t_{ii})$ for $i=1,\ldots, 4,$ and 
	\[
	\begin{aligned}
		f_{12} &= \log^{[1]} (t_{11},t_{22}) \, t_{12},
		\qquad 
		f_{23} = \log^{[1]} (t_{22},t_{33}) \, t_{23},
		\qquad 
		f_{34} = \log^{[1]} (t_{33},t_{44}) \, t_{34},
		\\
		f_{13} &=
		\log^{[1]} (t_{11},t_{33})\,t_{13} + 
		\log^{[2]} (t_{11},t_{22},t_{33}) \, t_{12}t_{23}, 
		\\
		f_{24} &=
		\log^{[1]} (t_{22},t_{44})\,t_{24} + 
		\log^{[2]} (t_{22},t_{33},t_{44}) \, t_{23}t_{34}, 
		\\
		f_{14}
		&=
		\log^{[1]} (t_{11},t_{44})\,t_{14}
		+
		\log^{[2]} (t_{11},t_{22},t_{44})\,t_{12}t_{24}
		+
		\log^{[2]} (t_{11},t_{33},t_{44})\,t_{13}t_{34}
		+
		\log^{[3]} (t_{11},t_{22},t_{33},t_{44})\,t_{12}t_{23}t_{34}.
	\end{aligned}
	\]
\end{ex}

Theorem \ref{thm: Tn Karcher mean} for $n=4$ leads to the following result. 

\begin{theorem} \label{thm: Karcher mean 4x4}
	On \(\T_4\), the Karcher mean \(G(\w;A_1,\ldots,A_m)\) is given by
\[
X=G(\w;A_1,\ldots,A_m)=DX',
\]
where 
\[
X'=G(\w;A_{(1)},\ldots,A_{(m)}) =\mtx{x_{ij}'}
\]
is unipotent upper triangular such that
$X'[1:3]$ is determined by
Theorem \ref{thm: Karcher mean 3x3}
and 
	\[
	\begin{aligned}
		x_{34}' =
		\frac{1}{c_{34}}\sum_{i=1}^m
		w_i  \,
		& \log^{[1]} \bigl(a_{33}^{(i)},a_{44}^{(i)}\bigr)\, a_{34}^{(i)},
		\\
		x_{24}' =\frac{1}{c_{24}}
		\sum_{i=1}^{m} w_i & 
		\Bigg[
		\log^{[1]} \bigl(a_{22}^{(i)},a_{44}^{(i)}\bigr)
		\big(a_{24}^{(i)} - x_{23}' a_{34}^{(i)} + x_{23}'x_{34}' a_{44}^{(i)}\big) 
		\\
		&\qquad  
		+
		\log^{[2]} \bigl(a_{22}^{(i)},a_{33}^{(i)},a_{44}^{(i)}\bigr)
		\big(a_{23}^{(i)} - x_{23}' a_{33}^{(i)}\big)
		\big(a_{34}^{(i)} - x_{34}' a_{44}^{(i)}\big)
		\Bigg],
		\\
		x_{14}' =  \frac{1}{c_{14}}\sum_{i=1}^{m} w_i 
		&   
		\Bigg[
		\log^{[1]} \bigl(a^{(i)}_{11},a^{(i)}_{44}\bigr)
		\bigl(a^{(i)}_{14}
		- x'_{12}a^{(i)}_{24} 	- x'_{13}a^{(i)}_{34}
		\\
		&\qquad \qquad \qquad 
		+ x'_{12}x'_{23}a^{(i)}_{34}
		+ x'_{12}x'_{24}a^{(i)}_{44} 
		+ x'_{13}x'_{34}a^{(i)}_{44} 
		- x'_{12}x'_{23}x'_{34}a^{(i)}_{44}
		\bigr)
		\\
		&\qquad 
		+
		\log^{[2]} \bigl(a^{(i)}_{11},a^{(i)}_{22},a^{(i)}_{44}\bigr)
		\bigl(a^{(i)}_{12}-x'_{12}a^{(i)}_{22}\bigr)
		\bigl(a^{(i)}_{24}-x'_{23}a^{(i)}_{34}-x'_{24}a^{(i)}_{44}+x'_{23}x'_{34}a^{(i)}_{44}\bigr)
		\\
		&\qquad 
		+
		\log^{[2]} \bigl(a^{(i)}_{11},a^{(i)}_{33},a^{(i)}_{44}\bigr)
		\bigl(a^{(i)}_{13}-x'_{12}a^{(i)}_{23}-x'_{13}a^{(i)}_{33}+x'_{12}x'_{23}a^{(i)}_{33}\bigr)
		\bigl(a^{(i)}_{34}-x'_{34}a^{(i)}_{44}\bigr)
		\\
		&\qquad 
		+
		\log^{[3]} \bigl(a^{(i)}_{11},a^{(i)}_{22},a^{(i)}_{33},a^{(i)}_{44}\bigr)
		\bigl(a^{(i)}_{12}-x'_{12}a^{(i)}_{22}\bigr)
		\bigl(a^{(i)}_{23}-x'_{23}a^{(i)}_{33}\bigr)
		\bigl(a^{(i)}_{34}-x'_{34}a^{(i)}_{44}\bigr)
		\Bigg].
	\end{aligned}
	\]
\end{theorem}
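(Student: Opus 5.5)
The plan is to specialize Theorem \ref{thm: Tn Karcher mean} to $n=4$. By Lemma \ref{thm: mean property}(2) and Section 2.1, the block $X'[1:3]$ is the Karcher mean of $(A_{(1)}[1:3],\ldots,A_{(m)}[1:3])$, so it is already given by Theorem \ref{thm: Karcher mean 3x3}. The block $X'[2:4]$ is likewise the Karcher mean of the $[2:4]$ blocks. Hence $x_{34}'$ and $x_{24}'$ come from the formulas of Theorem \ref{thm: Karcher mean 3x3} after shifting all indices by one: $x_{34}'$ is the analogue of $x_{12}'$, and $x_{24}'$ is the analogue of $x_{13}'$. This needs no new computation. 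What remains is the corner entry $x_{14}'$. For it I apply \eqref{x_pq'} with $(p,q)=(1,4)$, using Example \ref{thm: Log 4x4} for the $(1,4)$-entry of $\log$. That entry is a sum over the four paths in $\Gamma_{14}$, namely $(1,4)$, $(1,2,4)$, $(1,3,4)$ and $(1,2,3,4)$. The diagonal entries of $T_i:={X'_{14}}^{-1}A_{(i)}$ are those of $A_{(i)}$, so the divided-difference coefficients are exactly $\log^{[1]}(a^{(i)}_{11},a^{(i)}_{44})$, $\log^{[2]}(a^{(i)}_{11},a^{(i)}_{22},a^{(i)}_{44})$, and so on, as displayed.

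The main step is computing the off-diagonal entries of $T_i$ that appear in $\Pi_\gamma(T_i)$. Here $X'_{14}$ is unipotent with $(1,4)$-entry $0$ and all other superdiagonal entries equal to those of $X'$. I use Lemma \ref{thm:X-inv-A}: first compute the entries of ${X'_{14}}^{-1}$ from \eqref{Xinv term}. The results are $(1,2)\colon -x_{12}'$, $(2,3)\colon -x_{23}'$, $(3,4)\colon -x_{34}'$, $(1,3)\colon -x_{13}'+x_{12}'x_{23}'$, $(2,4)\colon -x_{24}'+x_{23}'x_{34}'$. For $(1,4)$ the paths give $-0+x_{12}'x_{24}'+x_{13}'x_{34}'-x_{12}'x_{23}'x_{34}'$, where the length-one path contributes nothing because $(X'_{14})_{14}=0$. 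Then \eqref{XinvA term} gives
$(T_i)_{12}=a_{12}^{(i)}-x_{12}'a_{22}^{(i)}$, $(T_i)_{23}=a_{23}^{(i)}-x_{23}'a_{33}^{(i)}$, $(T_i)_{34}=a_{34}^{(i)}-x_{34}'a_{44}^{(i)}$,
$(T_i)_{13}=a_{13}^{(i)}-x_{12}'a_{23}^{(i)}+(x_{12}'x_{23}'-x_{13}')a_{33}^{(i)}$, and $(T_i)_{24}=a_{24}^{(i)}-x_{23}'a_{34}^{(i)}+(x_{23}'x_{34}'-x_{24}')a_{44}^{(i)}$. Finally,
$(T_i)_{14}=a^{(i)}_{14}-x_{12}'a^{(i)}_{24}+(x_{12}'x_{23}'-x_{13}')a^{(i)}_{34}+(x_{12}'x_{24}'+x_{13}'x_{34}'-x_{12}'x_{23}'x_{34}')a^{(i)}_{44}$.

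Substituting these into $\Pi_\gamma(T_i)$ for the four paths and summing with weights $w_i$ and the factor $1/c_{14}$ produces the stated formula for $x_{14}'$. As a check, applying the same procedure to the $[2:4]$ block reproduces the displayed $x_{24}'$, consistent with the shifted $3\times3$ result. Then $X=DX'$ by Theorem \ref{thm: mean existence uniqueness}.

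The only real obstacle is bookkeeping in $(T_i)_{14}$. Sign errors are easy there, and one must remember that the $(1,4)$-entry of $X'_{14}$ is zero, which is why no $-x_{14}'a_{44}^{(i)}$ term appears. I would cross-check the expansion directly by computing the product ${X'_{14}}^{-1}A_{(i)}$ entrywise.
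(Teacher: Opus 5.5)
Your proposal is correct and takes the same route as the paper, which obtains this theorem by specializing Theorem \ref{thm: Tn Karcher mean} to $n=4$ via Example \ref{thm: Log 4x4} and Lemma \ref{thm:X-inv-A}; your entries of ${X'_{14}}^{-1}A_{(i)}$ match the displayed formula, including the absent $-x'_{14}a^{(i)}_{44}$ term and the $-x'_{24}a^{(i)}_{44}$ term in the $(2,4)$-entry. One harmless slip: under the index shift by one, $x'_{34}$ corresponds to $x'_{23}$ (not $x'_{12}$) of Theorem \ref{thm: Karcher mean 3x3}, which makes no difference since both formulas have the same $2\times 2$ form.
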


\subsection{A Block Recursive Algorithm}

To derive a symbolic expression for the Karcher mean \(X=DX'\) from Theorem \ref{thm: Tn Karcher mean}, one first computes \(D\) and then determines the entries \((X')_{1q}\) for \(2\le q\le n\); the remaining entries are obtained from analogous formulas. This requires \(n-1\) recursive stages. For numerical computation, however, the recursion must be carried out for each of the \(\frac{n(n-1)}{2}\) strictly upper-triangular entries of \(X'\). A more efficient approach is therefore to employ recursive block-matrix operations. The formulas simplify further when the diagonal entries of each \(A_{(i)}\) are pairwise distinct.

\begin{lemma}\label{thm: log-block}
	Let
	\[
	T=\begin{bmatrix}
		T_{11} & T_{12}\\
		0 & t_{nn}
	\end{bmatrix}\in \T_n,
	\qquad T_{11}=T[1:n-1].
	\]
	Then
	\[
	\log(T)=
	\begin{bmatrix}
		\log(T_{11}) & \log^{[1]} (T_{11},\, t_{nn}I_{n-1})\,T_{12}\\
		0 & \log(t_{nn})
	\end{bmatrix},
	\]
	where $\log^{[1]} (T_{11}, t_{nn}I_{n-1})$ denotes the analytic functional calculus of the $\log^{[1]} $ function applied to the commuting pair $(T_{11}, t_{nn}I_{n-1})\in (\T_{n-1})^2$. In particular, if no diagonal entry of $T_{11}$ equals $t_{nn}$, then
	\[
	\log^{[1]} (T_{11}, t_{nn}I_{n-1})
	=
	(\log(T_{11})-\log(t_{nn})I_{n-1})(T_{11}-t_{nn}I_{n-1})^{-1}.
	\]
\end{lemma}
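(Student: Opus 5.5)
We will use the $2\times 2$ block Parlett recurrence, Algorithm 3.2, with the block partition $T_{11}=T[1:n-1]$, $T_{12}\in\R^{n-1}$ and $T_{22}=[t_{nn}]$. By Section 2.1 and Remark \ref{R:blocks}, $\log(T)$ is block upper triangular with diagonal blocks $F_{11}=\log(T_{11})$ and $F_{22}=\log(t_{nn})$. It remains to identify the off-diagonal block $F_{12}$.

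First consider the case where no diagonal entry of $T_{11}$ equals $t_{nn}$. Equation \eqref{Parlett recurrence block} reads
\[
T_{11}F_{12}-F_{12}t_{nn}=\log(T_{11})T_{12}-T_{12}\log(t_{nn}),
\]
that is,
\[
(T_{11}-t_{nn}I_{n-1})F_{12}=\bigl(\log(T_{11})-\log(t_{nn})I_{n-1}\bigr)T_{12}.
\]
The matrix $T_{11}-t_{nn}I_{n-1}$ is upper triangular with nonzero diagonal, so it is invertible. It also commutes with $\log(T_{11})-\log(t_{nn})I_{n-1}$, since both are functions of $T_{11}$. Hence
\[
F_{12}=\bigl(\log(T_{11})-\log(t_{nn})I_{n-1}\bigr)(T_{11}-t_{nn}I_{n-1})^{-1}T_{12}.
\]
This is the ``in particular'' formula. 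It coincides with $\log^{[1]}(T_{11},t_{nn}I_{n-1})T_{12}$, because the functional calculus of $\log^{[1]}(z,t_{nn})=(\log z-\log t_{nn})/(z-t_{nn})$, a function analytic in $z$ for fixed $t_{nn}>0$, applied to $T_{11}$ is exactly this product. To see this, apply the functional calculus to the identity $(z-t_{nn})\log^{[1]}(z,t_{nn})=\log z-\log t_{nn}$ and invert the first factor.

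For the general case, we note that $g(z)=\log^{[1]}(z,t_{nn})$ extends analytically across $z=t_{nn}$ with value $1/t_{nn}$. So $\log^{[1]}(T_{11},t_{nn}I_{n-1})$ is a well-defined analytic matrix function of $T_{11}$. Its entries are given by Theorem \ref{thm:Tn-analytic} in terms of divided differences of $g$, and these are continuous in the diagonal entries. Both sides of the claimed identity depend continuously on $T$: the left side through the analyticity of $\log$ on $\T_n$, the right side through the continuity of divided differences. The set of $T$ with $t_{nn}$ distinct from the diagonal of $T_{11}$ is dense in $\T_n$, so the identity extends by continuity to all of $\T_n$.

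The main obstacle is this extension step, namely making precise what ``functional calculus applied to the commuting pair'' means when eigenvalues coincide. We will handle it by treating $t_{nn}$ as a fixed parameter and viewing $\log^{[1]}(\cdot,t_{nn})$ as a single-variable analytic function on $(0,\infty)$, so that only the one-variable calculus of Theorem \ref{thm:Tn-analytic} is needed.

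As a cross-check, we can also verify the formula entrywise. Theorem \ref{thm:Tn-analytic} gives the $(i,n)$ entry of $\log(T)$ as a sum over paths ending at $n$. Grouping these paths by their last arc $(\ell,n)$ produces
\[
\sum_\ell \Bigl(\sum_{\gamma\in\Gamma_{i\ell}}\log^{[|\gamma|+1]}(\ldots,t_{nn})\,\Pi_\gamma\Bigr)t_{\ell n}.
\]
This matches $(g(T_{11})T_{12})_i$, since $g^{[k]}(a_1,\dots,a_{k+1})=\log^{[k+1]}(a_1,\dots,a_{k+1},t_{nn})$ by the symmetry of divided differences.
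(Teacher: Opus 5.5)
Your proposal is correct and follows the paper's proof. You apply the block Parlett recurrence to get $(T_{11}-t_{nn}I_{n-1})F_{12}=(\log(T_{11})-\log(t_{nn})I_{n-1})T_{12}$, invert when no diagonal entry of $T_{11}$ equals $t_{nn}$, and extend to the general case by continuity, with more detail than the paper gives on the functional-calculus identification and the density step; your entrywise cross-check via Theorem~\ref{thm:Tn-analytic} and $g^{[k]}(a_1,\dots,a_{k+1})=\log^{[k+1]}(a_1,\dots,a_{k+1},t_{nn})$ in fact proves the coincident-diagonal case directly, so the density argument is not needed.
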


	\begin{proof}
		Apply the Parlett recurrence \eqref{Parlett recurrence block} to the $(1,2)$-block with $f(x)=\log x$. This yields
		\[
		(T_{11}-t_{nn}I_{n-1})(\log T)_{12}
		=
		(\log(T_{11})-\log(t_{nn})I_{n-1})\,T_{12}.
		\]
		If no diagonal entry of $T_{11}$ equals $t_{nn}$, then $T_{11}-t_{nn}I_{n-1}$ is invertible, and hence
		\[
		(\log T)_{12}
		=
		(T_{11}-t_{nn}I_{n-1})^{-1}
		(\log(T_{11})-\log(t_{nn})I_{n-1})\,T_{12}.
		\]
		The general case follows by analytic continuation of the divided difference function $ \log^{[1]} $. This completes the proof.
	\end{proof}

The following theorem shows how to find the Karcher mean on \(\T_n\) using \((n-1)\) steps of recursive block-matrix iterations. 

\begin{theorem}\label{thm:Tn-Karcher-mean-block}
	Let
	$
	X=G(\w;A_1,\ldots,A_m)
	$
	be the \(\w\)-weighted Karcher mean on \(\T_n\). Set
	\[
	D=\diag(A_1^{w_1}\cdots A_m^{w_m}),\qquad
	A_{(i)}=D^{-1}A_i.
	\]
	Then \(X=DX'\), where
	$
	X' =G(\w;A_{(1)},\ldots,A_{(m)})
	$
	is unipotent upper triangular. Moreover, \(X'\) can be computed recursively as follows. For \(1\le \ell\le n\), define
	\[
	X'_{\ell}:=X'[1:\ell],\qquad
	A_{(i,\ell)}:=A_{(i)}[1:\ell],\quad i=1,\ldots,m.
	\]
	
	\begin{enumerate}
		\item \(\diag(X')=I_n\).
		
		\item Suppose that \(X'_{\ell-1}\) has been determined for some \(1<\ell\le n\). Then
		\[
		X'_{\ell}
		=
		\begin{bmatrix}
			X'_{\ell-1} & \y'_{\ell-1}\\
			0 & 1
		\end{bmatrix},
		\]
		where \(\y'_{\ell-1}\in\R^{\ell-1}\) is determined as follows. Write
		\[
		A_{(i,\ell)}
		=
		\begin{bmatrix}
			A_{(i,\ell-1)} & \b_{(i,\ell-1)}\\
			0 & a_{\ell\ell}^{(i)}
		\end{bmatrix},
		\]
		and define
		\[
		G_{i,\ell-1}
		=
		\log^{[1]} \!\left(
		(X'_{\ell-1})^{-1}A_{(i,\ell-1)},\;
		a_{\ell\ell}^{(i)}I_{\ell-1}
		\right).
		\]
		Then we have 
\begin{equation}
    		\y'_{\ell-1}
		=
		X'_{\ell-1}
		\left(
		\sum_{i=1}^m w_i a_{\ell\ell}^{(i)}\, G_{i,\ell-1}
		\right)^{-1}
		\left[
		\sum_{i=1}^m
		w_i\, G_{i,\ell-1}(X'_{\ell-1})^{-1}\b_{(i,\ell-1)}
		\right].
\end{equation}
	\end{enumerate}
\end{theorem}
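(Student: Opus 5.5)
We reduce to the unipotent normalization exactly as in the proof of Theorem \ref{thm: mean existence uniqueness}. There, Lemma \ref{thm: Karcher bi-affine} and Lemma \ref{thm: mean property}(1) give $X=DX'$ with $X'=G(\w;A_{(1)},\ldots,A_{(m)})$ and $\diag(X')=I_n$; this is item (1). By Lemma \ref{thm: mean property}(2), applied with the block $[1:\ell]$, the matrix $X'_\ell$ is the Karcher mean of $(A_{(1,\ell)},\ldots,A_{(m,\ell)})$. Its upper-left block $[1:\ell-1]$ is $X'_{\ell-1}$ and its $(\ell,\ell)$ entry is $1$. So $X'_\ell$ has the stated form, and the only unknown is the last column $\y'_{\ell-1}$. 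We then work entirely in $\T_\ell$ and must show that the Karcher equation for $X'_\ell$ forces the displayed formula.

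Next compute $(X'_\ell)^{-1}A_{(i,\ell)}$ in block form. We have
\[
(X'_\ell)^{-1}=\begin{bmatrix}(X'_{\ell-1})^{-1} & -(X'_{\ell-1})^{-1}\y'_{\ell-1}\\ 0&1\end{bmatrix},
\]
and therefore
\[
(X'_\ell)^{-1}A_{(i,\ell)}=\begin{bmatrix}T_i & (X'_{\ell-1})^{-1}\b_{(i,\ell-1)}-a^{(i)}_{\ell\ell}(X'_{\ell-1})^{-1}\y'_{\ell-1}\\ 0 & a^{(i)}_{\ell\ell}\end{bmatrix},
\qquad T_i:=(X'_{\ell-1})^{-1}A_{(i,\ell-1)}.
\]
Apply Lemma \ref{thm: log-block} to this matrix in $\T_\ell$. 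The $(1,2)$ block of its logarithm is
\[
G_{i,\ell-1}\bigl((X'_{\ell-1})^{-1}\b_{(i,\ell-1)}-a^{(i)}_{\ell\ell}(X'_{\ell-1})^{-1}\y'_{\ell-1}\bigr),
\]
and $G_{i,\ell-1}$ is exactly as defined in the statement. The diagonal blocks of the weighted sum of logs vanish automatically: the upper-left block is the Karcher equation for $X'_{\ell-1}$, and the $(\ell,\ell)$ entry vanishes by the choice of $D$. Hence the Karcher equation for $X'_\ell$ reduces to the linear system
\[
\Bigl(\sum_i w_i a^{(i)}_{\ell\ell}G_{i,\ell-1}\Bigr)(X'_{\ell-1})^{-1}\y'_{\ell-1}=\sum_i w_i G_{i,\ell-1}(X'_{\ell-1})^{-1}\b_{(i,\ell-1)}.
\]
Solving this and multiplying on the left by $X'_{\ell-1}$ gives the stated formula, provided the matrix $M:=\sum_i w_i a^{(i)}_{\ell\ell}G_{i,\ell-1}$ is invertible.

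The main obstacle is the invertibility of $M$. Each $G_{i,\ell-1}$ is an analytic function of the upper triangular matrix $T_i$, so it is upper triangular, and by Remark \ref{R:blocks}(1) its diagonal entries are $\log^{[1]}((T_i)_{kk},a^{(i)}_{\ell\ell})$. Since $X'_{\ell-1}$ is unipotent, $(T_i)_{kk}=a^{(i)}_{kk}$. So $M$ is upper triangular with diagonal entries
\[
\sum_i w_i a^{(i)}_{\ell\ell}\log^{[1]}(a^{(i)}_{kk},a^{(i)}_{\ell\ell})=c_{k\ell}.
\]
These are positive because $\log^{[1]}(s,t)>0$ for $s,t>0$; indeed $c_{k\ell}\ge 1$. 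Hence $M$ is invertible. The one point needing care is that $\log^{[1]}(T,sI)$ for a triangular $T$ is triangular with diagonal entries $\log^{[1]}(t_{kk},s)$. This follows from the power-series/functional-calculus description in Section 2.1, applied to the one-variable analytic function $z\mapsto\log^{[1]}(z,s)$.

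Finally, existence and uniqueness of $X'_\ell$ (Theorem \ref{thm: mean existence uniqueness}) guarantee that the unique solution of this linear system is the true last column. Induction on $\ell$ from $1$ to $n$ then completes the proof.
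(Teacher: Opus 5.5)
Your proposal is correct and follows the paper's proof step for step. Both arguments write $(X'_\ell)^{-1}A_{(i,\ell)}$ in block form, apply Lemma \ref{thm: log-block} to the $(1,2)$ block, and solve the resulting linear system for $\y'_{\ell-1}$. Your extra observation, that $\sum_i w_i a^{(i)}_{\ell\ell}G_{i,\ell-1}$ is upper triangular with diagonal entries $c_{k\ell}>0$, supplies the invertibility that the paper's proof uses without comment.
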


\begin{proof}
	By Section 2.1,
	\[
	X'_{\ell}=G(\w;A_{(1,\ell)},\ldots,A_{(m,\ell)}),
	\]
	so the Karcher equation gives
	\[
	0=\sum_{i=1}^{m} w_i\log\bigl((X'_{\ell})^{-1} A_{(i,\ell)} \bigr).
	\]
	Using
	\[
	(X'_{\ell})^{-1}
	=
	\begin{bmatrix}
		(X'_{\ell-1})^{-1} & -(X'_{\ell-1})^{-1}\y'_{\ell-1}\\
		0 & 1
	\end{bmatrix},
	\]
	we obtain
	\[
	\begin{aligned}
		0
		&=\sum_{i=1}^{m} w_i\log \left(
		\begin{bmatrix}
			(X'_{\ell-1})^{-1} A_{(i,\ell-1)} &
			(X'_{\ell-1})^{-1} \b_{(i,\ell-1)} - a^{(i)}_{\ell\ell}(X'_{\ell-1})^{-1}\y'_{\ell-1}\\
			0 & a^{(i)}_{\ell\ell}
		\end{bmatrix}
		\right).
	\end{aligned}
	\]
	Evaluating the \((1,2)\)-block using Lemma \ref{thm: log-block} yields
	\[
	\begin{aligned}
		0
		&=\sum_{i=1}^{m} w_i\,
		\log^{[1]} \bigl((X'_{\ell-1})^{-1} A_{(i,\ell-1)},\, a^{(i)}_{\ell\ell}I_{\ell-1}\bigr) \, 
		\left[(X'_{\ell-1})^{-1} \b_{(i,\ell-1)} - a^{(i)}_{\ell\ell}(X'_{\ell-1})^{-1}\y'_{\ell-1}\right]
		\\
		&=
		\sum_{i=1}^{m} w_i\, G_{i,\ell-1}
		\left[(X'_{\ell-1})^{-1} \b_{(i,\ell-1)} - a^{(i)}_{\ell\ell}(X'_{\ell-1})^{-1}\y'_{\ell-1}\right].
	\end{aligned}
	\]
	Hence
	\[
	\sum_{i=1}^{m} w_i G_{i,\ell-1} (X'_{\ell-1})^{-1} \b_{(i,\ell-1)}
	=
	\sum_{i=1}^{m} w_i a^{(i)}_{\ell\ell} G_{i,\ell-1} (X'_{\ell-1})^{-1}\y'_{\ell-1},
	\]
	which gives
	\[
	\y'_{\ell-1}
	=
	X'_{\ell-1}
	\left(\sum_{i=1}^{m} w_i a^{(i)}_{\ell\ell} G_{i,\ell-1}\right)^{-1}
	\left[\sum_{i=1}^{m} w_i G_{i,\ell-1} (X'_{\ell-1})^{-1} \b_{(i,\ell-1)}\right].
	\qedhere
	\]
\end{proof}


\begin{thebibliography}{99}

\bibitem{dB05}
C.\ de Boor,  Divided Differences, Surv. Approx. Theory 1 (2005), 46--69.



\bibitem{Hu26}
H.\ Huang, The Karcher mean on $2\times 2$ triangular matrices with positive diagonal entries, manuscript.

\bibitem{Law25}
 J.\ Lawson,  The weighted exponential (Karcher) mean on Lie groups. Acta Sci. Math. (Szeged) 92, 323–341 (2026).
 \href{https://doi.org/10.1007/s44146-025-00187-5}{https://doi.org/10.1007/s44146-025-00187-5}

\bibitem{Law24}
J.\ Lawson, Weighted Karcher means on unipotent Lie groups. Adv. Oper. Theory 9, 27 (2024). \href{https://doi.org/10.1007/s43036-024-00326-9}{https://doi.org/10.1007/s43036-024-00326-9}


\bibitem{Par76}
 B.\ Parlett,  A recurrence among the elements of functions of triangular matrices, Linear Algebra
Appl., 14 (1976), 117-121.

\bibitem{PA12}
 X.\ Pennec and V.\ Arsigny, Exponential barycenters of the canonical Cartan connection and
invariant means on Lie Groups, Frederic Barbaresco and Amit Mishra and Frank Nielsen, Matrix
Information Geometry, Springer (2012), 123--168.

\bibitem{PL20}
 X.\ Pennec and  M.\ Lorenzi, Beyond Riemannian geometry: the affine connection setting for 
 transformation groups, Riemannian Geometric Statistics in Medical Image Analysis 2020,
 169-229.  


\end{thebibliography}

\end{document}